\documentclass[12pt]{amsart}

%language
\usepackage[english]{babel}
\usepackage[T1]{fontenc}
\usepackage[utf8]{inputenc}

\usepackage{a4wide}

%maths
\usepackage{amssymb}
\usepackage{amsthm}
\usepackage{amsfonts}
\usepackage{mathtools}
\usepackage{color}
\usepackage{graphicx}

% opening
\title[Weak compactness for 4-th order systems]{Weak compactness of solutions for fourth order elliptic systems with critical growth}

\author{Pawe\l{} Goldstein}
\address{Pawe\l{} Goldstein\hfill$\phantom{a}$ \linebreak\hspace*{9pt} Institute of Mathematics, University of Warsaw, Banacha 2, 02--097 Warszawa, Polska}
\email{P.Goldstein@mimuw.edu.pl}
\thanks{All authors partially supported by MNiSW Grant no N N201 397737, \emph{Nonlinear partial differential equations: geometric and variational problems}}

\author{Pawe\l{} Strzelecki}
\address{Pawe\l{} Strzelecki\hfill$\phantom{a}$ \linebreak\hspace*{9pt} Institute of Mathematics, University of Warsaw, Banacha 2, 02--097 Warszawa, Polska}
\email{P.Strzelecki@mimuw.edu.pl}

\author{Anna Zatorska-Goldstein}
\address{Anna Zatorska-Goldstein\hfill$\phantom{a}$ \linebreak\hspace*{9pt} Institute of Applied Mathematics and Mechanics, University of Warsaw, Banacha 2,\linebreak\hspace*{9pt} 02--097~Warszawa, Polska}
\email{A.Zatorska-Goldstein@mimuw.edu.pl}

\date{\today}

%fonts
\newcommand{\bb}{\mathbb{B}}
\newcommand{\R}{\mathbb{R}}
\newcommand{\cC}{\mathcal{C}}
\newcommand{\eps}{\varepsilon}
\newcommand{\tD}{\tilde{D}}
\newcommand{\tE}{\tilde{E}}

%\newcommand{\yy}{\mbox{$\textcolor{red}{\bigstar}$}}

%operators
\newcommand{\dv}{\mathrm{div}}

%math symbols
\newcommand{\les}{\lesssim}
\newcommand{\rhu}{\rightharpoonup}
\def\charfn_#1{{\raise1.2pt\hbox{$\chi  %
_{\kern-1pt\lower4pt\hbox{{$\scriptstyle#1$}}}$}}}  %

%theorems

\theoremstyle{plain}
\newtheorem{lemma}{\bf Lemma}[section]
\newtheorem{theorem}[lemma]{\bf Theorem}

\newtheorem{corollary}[lemma]{\bf Corollary}

\theoremstyle{definition}

\newtheorem{example}[lemma]{\bf Example}

\theoremstyle{remark}
\newtheorem{remark}[lemma]{\bf Remark}
\newtheorem*{remark*}{\bf Remark}

% to remove MR numbers provided by BibTeX use the line below
\providecommand{\MR}[1]{}

 \numberwithin{equation}{section}

%%%%%%%%%%%%%%%%%%%%%%%%%%%%%%%%%%%%%%%%%%%%%%%%%%%%%%%%%%%%%%%%%%%%%%%%%%%%%%%%%%%%%%%%
\begin{document}

\begin{abstract}
We consider a class of fourth order elliptic systems which include the Euler--Lagrange equations of biharmonic mappings in dimension $4$ and we prove that weak limit of weak solutions to such systems is again a weak solution to a limit system.
\end{abstract}

\maketitle
\providecommand{\MR}[1]{}

\section{Introduction}

\noindent
In this paper we consider fourth order elliptic systems of equations of the form
\begin{equation} \label{system_type}
\Delta^2 u=\Delta (D\cdot\nabla u)+\dv (E\cdot \nabla u)+G\cdot\nabla u + \Delta \Omega\cdot \nabla u
\end{equation}
for an unknown map $u\colon\bb^4\subset\R^4\to\R^m$,
i.e., in components,
\begin{equation}
\Delta^2 u^i=\Delta \Big( D^{ij}_\alpha \partial_\alpha u^j \Big)
    + \partial_\alpha \Big( E^{ij}_{\alpha \beta} \partial_\beta u^j \Big)
    + G^{ij}_\alpha \partial_\alpha u^j + \Delta (\Omega^{ij}_\alpha) \partial_\alpha u^j
\end{equation}
where $\alpha, \beta = 1,2,3,4$ and $i,j = 1,\ldots,m$. The coefficient functions $D,E,G,\Omega$ are assumed to satisfy
\begin{equation}
	\label{c spaces}
	D\in W^{1,2}, \qquad E\in L^2, \qquad G\in L^{4/3}, \qquad
	\Omega\in W^{1,2}(\bb^4; \text{\textit{so}}(m) \otimes \Lambda^1 \R^4)\, .
\end{equation}
We study compactness of the space of solutions in the weak sequential topology of the Sobolev space $W^{2,2}$.

Let us note immediately that under the above assumptions $G\cdot \nabla u$ is just in $L^1$, as $\nabla u\in W^{1,2}\subset L^4$ by Sobolev imbedding in dimension $4$. Thus, \eqref{system_type} is critical. However, the whole point is that we allow  $D,E,G,\Omega$ to depend nonlinearly on $u$. The class of systems we consider contains, in particular, the Euler--Lagrange equations of biharmonic maps from domains in $\R^4$ into compact Riemannian manifolds. Our approach relies in a crucial way on the antisymmetry of the 1-form $\Omega$ and on the use of nonlinear counterparts of the Hodge decomposition, originating in gauge theory. This key idea is due to T.~Rivi\`{e}re; it has been first used in his pioneering paper \cite{Riviere} on conformally invariant second order elliptic systems in the plane, with harmonic maps from planar domains into compact Riemannian manifolds serving as the crucial example. Later on,   Rivi\`{e}re--Struwe \cite{RS}, Lamm-- Rivi\`{e}re \cite{LammRiviere}, and Struwe \cite{Struwe} extended this approach to stationary harmonic maps in higher dimensions and to biharmonic maps.

Let us also note that \eqref{system_type} in full generality is wider than the class of biharmonic maps. It can happen in dimension $4$ that a solution of \eqref{system_type} is continuous, even $C^{1,\lambda}$, but still not $C^2$. We shall comment on that later on; let us now state the main result.

\begin{theorem} \label{main theorem}
Suppose $(u_k)$ is a sequence in $W^{2,2}(\bb^4,\R^m)$ of weak solutions to the system %of the form \eqref{system_type}, i.e.
\begin{equation}\label{system_k}
\Delta^2 u_k=\Delta (D_k\cdot\nabla u_k)+\dv (E_k\cdot \nabla u_k)+G_k\cdot\nabla u_k + \Delta \Omega_k\cdot \nabla u_k \quad \text{in $\bb^4$}.
\end{equation}
Suppose $u_k \rhu u$ in $W^{2,2}(\bb^4,\R^m)$. If the coefficients $D_k$, $E_k$, $G_k$, $\Omega_k$ 
are weakly convergent in their respective Sobolev spaces, i.e.
\begin{gather}
D_k \xrightharpoonup{W^{1,2}} D,
\qquad  \Omega_k \xrightharpoonup{W^{1,2}} \Omega,\qquad
E_k \xrightharpoonup{L^{2}} E,
\qquad G_k \xrightharpoonup{L^{4/3}} G,
\label{C conv 2}
\end{gather}
then $u$ is a weak solution to the limit system
\begin{equation} \label{system}
\Delta^2 u=\Delta (D\cdot\nabla u)+\dv (E\cdot \nabla u)+G\cdot\nabla u + \Delta \Omega\cdot \nabla u \quad \text{in $\bb^4$}.
\end{equation}
\end{theorem}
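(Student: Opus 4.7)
The plan is to test each equation against an arbitrary $\varphi\in C_c^\infty(\bb^4,\R^m)$ and to pass to the limit term by term, after recasting the genuinely critical contributions in a form to which compensated compactness applies. The left-hand side $\int \Delta u_k\,\Delta\varphi \to \int \Delta u\,\Delta\varphi$ is immediate from $u_k\rhu u$ in $W^{2,2}$. For the subcritical contributions I would exploit the compactness of the Rellich embedding $W^{1,2}\hookrightarrow L^p$ (valid for every $p<4$ in dimension four): in each of $(D_k\cdot\nabla u_k)\Delta\varphi$ and $(E_k\cdot\nabla u_k)\nabla\varphi$ one factor converges strongly in some $L^p$, $p<4$, while the other is weakly convergent in a conjugate space, so a routine weak--strong pairing produces the desired limit.

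The serious difficulty lies in $\int G_k\cdot\nabla u_k\,\varphi$ and $\int \Delta\Omega_k\cdot\nabla u_k\,\varphi$: by the sharp Sobolev embedding in dimension four, $\nabla u_k$ sits only weakly in $L^4$ and $G_k$ only weakly in $L^{4/3}$, while $\Delta\Omega_k\rhu\Delta\Omega$ merely in $W^{-1,2}$; neither product is automatically weakly continuous. To handle these terms I would follow the gauge-theoretic strategy of Rivi\`ere~\cite{Riviere}, extended to fourth-order systems by Lamm--Rivi\`ere~\cite{LammRiviere}, whose essential input is the antisymmetry $\Omega^{ij}=-\Omega^{ji}$. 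Concretely, I would construct a Coulomb/Uhlenbeck-type gauge $P_k\in W^{2,2}(\bb^4,SO(m))$ with $W^{2,2}$ bounds depending only on $\|\Omega_k\|_{W^{1,2}}$, together with an auxiliary potential $\xi_k$ produced by a Hodge decomposition, so that the transformed connection satisfies $P_k^{-1}dP_k+P_k^{-1}\Omega_k P_k=d^\ast\xi_k$. Applying $P_k$ to the equation and integrating by parts, the right-hand side can be reorganised as a sum of ``Jacobian-type'' bilinear forms $\partial_\alpha f\,\partial_\beta g-\partial_\beta f\,\partial_\alpha g$ paired against test functions, plus a strictly subcritical remainder mixing $D_k$, $E_k$ and $G_k$.

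Once the equation is in this conservation-law form, I would pass to the limit using the Coifman--Lions--Meyer--Semmes Hardy-space estimate, combined with $h^1$--$\mathrm{BMO}$ duality: the Jacobian of two weakly $W^{1,2}$-convergent factors is distributionally convergent, which suffices to control the pairing against a smooth $\varphi$. A subsequence of the gauges $P_k$ converges weakly in $W^{2,2}$ to a gauge $P$ associated to the limit $\Omega$, and undoing the gauge transformation in the limit restores equation~\eqref{system}. The main obstacle I anticipate is twofold: first, producing the fourth-order Coulomb gauge with quantitative bounds that are stable under only weak convergence of $\Omega_k$, and identifying the weak limit of $P_k$ with a genuine gauge for the limit coefficient $\Omega$ rather than for some spurious object; and second, carrying out the algebraic manipulations that cast \emph{every} critical bilinear term of the equation, after the gauge change, as a Jacobian or similar null form, since the contributions of $D_k$, $E_k$ and $G_k$ interact nontrivially with $\Omega_k$ and must all be absorbed simultaneously.
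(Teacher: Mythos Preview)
Your strategy correctly isolates the two genuinely critical terms and reaches for the Lamm--Rivi\`ere/Struwe gauge, but the proposed endgame via Coifman--Lions--Meyer--Semmes and $h^1$--$\mathrm{BMO}$ duality does not close. After the gauge change the equation becomes, on each small ball,
\[
\Delta(P_k\Delta u_k)+(\text{easy terms})=H_k\cdot\nabla u_k+\ast d\Delta\xi_k\cdot(P_k\nabla u_k),
\]
and while the second term on the right does carry a div--curl structure (one uses $\int \ast d\Delta\xi_k\cdot\nabla\psi=0$), the first does not. The function $H_k\in L^{4/3}$ contains, among several contributions, the bare term $P_kG_k$; by hypothesis $G_k$ is an \emph{arbitrary} sequence weakly convergent in $L^{4/3}$, with no divergence-free or gradient structure, so $P_kG_k\cdot\nabla u_k$ is neither a Jacobian nor ``strictly subcritical''. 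Compensated compactness therefore says nothing about this term, and your remainder is precisely the place where the limit passage breaks down.

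The paper replaces the CLMS step by P.-L.~Lions' concentration--compactness. The key input is that $u_k$ lies in $W^{2,2}$, not merely $W^{1,4}$: the Sobolev inequality $\|\nabla(\phi^2(u_k-u))\|_{L^4}\lesssim\|\nabla^2(\phi^2(u_k-u))\|_{L^2}$ lets one bound $\bigl|\langle H_k\cdot\nabla u_k-H\cdot\nabla u,\phi^5\rangle\bigr|$ by $\bigl(\int\phi^4\,d\mu_k\bigr)^{5/4}+o(1)$, where $\mu_k$ is built from $|H_k|^{4/3}$, $|\nabla^2 u_k|^2$, $|\nabla P_k|^4$, $|\nabla^2\xi_k|^2$ and is bounded in $L^1$. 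Lions' lemma then forces the defect to be a countable sum of Dirac masses, which are removed by a capacity argument exploiting that points have zero $W^{2,2}$-capacity in $\R^4$. Two further points are more serious than you suggest: the Uhlenbeck gauge exists only on balls where $\|\Omega_k\|_{W^{1,2}}$ is small, so one must first excise a finite concentration set $A_1$ and cover $\bb^4\setminus A_1$ by good balls, removing $A_1$ by capacity at the very end; and since the gauge is not unique, one does not ``identify the weak limit of $P_k$ with a gauge for $\Omega$'' abstractly but simply passes to weak $W^{2,2}$-limits of the chosen $P_k,\xi_k$ and checks by hand that the limiting pair still satisfies Uhlenbeck's relation for $\Omega$.
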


Related compactness results for biharmonic maps, along with an energy identity accounting for the possible `energy loss' under the limit passage have been obtained by Hornung and Moser \cite{HornungMoser} and Laurain and  Rivi\`{e}re \cite{LaurainR}. (For second order elliptic systems $\Delta u_k=\Omega_k\cdot \nabla u_k$, Sharp and Topping \cite{SharpTopping} provide a compactness theorem under an assumption that rules out the concentration of the energy $\int |\nabla u_k|^2$ but allows for concentration of $\int |\nabla^2 u_k|$.)

One of our main points is that the proof in this paper is different from \cite{HornungMoser} and \cite{LaurainR}:  contrary to these two papers, we do not rely at all either on continuity of solutions (or their first and second order derivatives) or on any other improved regularity properties (like higher integrability of $\nabla^2 u$), working all the time just in $W^{2,2}$ and using  the imbedding $W^{2,2}\subset W^{1,4}$.\;{}\footnote{In Section~\ref{example} we provide an example showing that \eqref{system_type} admits weak solutions that are $C^1$ but not $C^2$.} Besides K.~Uhlenbeck's Theorem~\ref{Uhlenbeck}, the main tool is the concentration--compactness method of P.-L. Lions. The combination of the two allows us, very roughly speaking, to reduce the complexity of \eqref{system_k}--\eqref{system} to the case
\[
\Delta^2 u_k = G_k\cdot \nabla u_k,  \qquad u_k\rhu u \quad\text{in \;} W^{2,2},
\]
where $G_k$ is a bounded sequence in $L^{4/3}$, with $G_k\rhu G$. %in $L^s$ for all $1\le s < 4/3$. (We do not use here the natural %$s=4/3$ in order to allow for \emph{multilinear\/} dependence of %$G_k$ on the derivatives of $u_k$, cf. the remark below.)
Then,  a passage to the limit (in dimension 4) can be justified by an application of Sobolev inequality, the concentration--compactness lemma \cite[Lemma~1.2]{Lions}, and a standard capacity type argument.

\begin{remark} Conditions \eqref{c spaces} imposed on the coefficients allow for nonlinear dependence on $u$. Roughly speaking, the class \eqref{system_type} contains systems of the form $\Delta^2 u= R(u,\nabla u, \nabla^2u)$ where the right hand side $R$ depends polynomially on (the entries of) $\nabla u$ and $\nabla^2 u$, with coefficients that are smooth and bounded in $u$, so that
	\[
	|R(u,\nabla u,\nabla^2 u)|\lesssim |\nabla^2 u|^2 +  |\nabla u|^4 + 1\, .
	\]
The point is that the terms of $R$ depending quadratically on $\nabla^2 u$ \emph{need to have some structure\/}, whereas all the other terms are allowed to be arbitrary. A model case looks as follows (summation over repeated indices is understood):
\begin{gather}
D=D(u, \nabla u) \in W^{1,2}, \qquad D^{ij}_\alpha=d^{ij}_{l}(u)\partial_\alpha u^l\, ;\label{ex D}\\
E=E(u,\nabla u, \nabla^2 u) \in L^2, \qquad E^{ij}_{\alpha\beta}=e_{l}^{ij,(1)}(u)\partial_\alpha(\partial_\beta u^l) + e_{ls}^{ij,(2)}(u)\partial_\alpha u^l\partial_\beta u^s\, ;\\
G=G(u, \nabla u,\nabla^2 u) \in L^{4/3}, \qquad G^{ij}_\alpha = g_{lsp,\beta\gamma}^{ij}(u)\big(\partial_\beta(\partial_\gamma u^l) +\partial_\beta u^l\partial_\gamma u^s\big)\partial_\alpha u^p\, ;\label{ex F}
\end{gather}
finally, the antisymmetric term $\Omega=\Omega(u, \nabla u) \in W^{1,2}(\bb^4; so(m) \otimes \Lambda^1 \R^4)$ is given by
\begin{equation}
	\Omega^{ij}_\alpha = w_i\partial_\alpha w^j - w^j \partial_\alpha w^i = -\Omega^{ji}_\alpha\, .
	\label{ex O}
\end{equation}
where  $w=\nu\circ u$ of $u$ for some bounded smooth map $\nu\colon \R^m\to \R^m$.

All the coefficients $d,e,g$ (with various indices) in \eqref{ex D}--\eqref{ex F} are assumed to be of class $C^1 \cap W^{1,\infty}$ on $\R^m$. Struwe \cite[Section 2]{Struwe} explains that the biharmonic map equation can be written in that form, with $\nu$ being the normal to the target manifold. In that case, $d,e,g$ depend explicitly on $\nu$ and its derivatives, and the growth estimates
\begin{align}
|D| + |\Omega| &\les |\nabla u|, \nonumber\\
|E|+|\nabla D| +|\nabla \Omega|&\les |\nabla^2 u|+|\nabla u|^2,\label{Gr E}\\
|G| &\les |\nabla^2 u| |\nabla u| + |\nabla u|^3 \nonumber
\end{align}
follow from \eqref{ex D}--\eqref{ex O}.
Under these assumptions, for every weakly convergent sequence $u_k\rhu u$ in $W^{2,2}$ we have convergence \eqref{C conv 2}, with
\[
L^2\ni E_k=E(u_k,\nabla u_k,\nabla^2 u_k)\rhu E=E(u,\nabla u, \nabla^2 u)\]
in $L^p$ for $p<2$, and $L^{4/3}\ni G_k=G(u_k,\nabla u_k,\nabla^2 u_k)\rhu G= G(u,\nabla u, \nabla^2 u)$ in $L^s$ for $s<4/3$. Since $E_k$ is bounded in $L^2$ and $G_k$ is bounded in $L^{4/3}$, and we deal \emph{with a bounded domain},  it is an exercise to see that in fact  $E_k\rhu E$ and $G_k\rhu G$ also for the limiting exponents $p=2$ and $s=4/3$.

We do not need the full strength of \eqref{ex D}--\eqref{Gr E}.
\end{remark}
\medskip

\begin{remark} Our proof depends in a crucial way on Sobolev imbedding in dimension~4. It would be interesting to know what happens in higher dimensions. For example: is the convergence $u_k\to u$ in $BMO$, combined with the boundedness of $(u_k)$ in $W^{2,2}$, sufficient to guarantee (a) that $u$ solves the limiting system, (b) that $u_k\to u$ strongly in $W^{2,2}$? It is possible to check, using the sharp version of Gagliardo--Nirenberg inequality $\|\nabla w\|_{L^4}^2\lesssim \|w\|_{BMO}\|w\|_{W^{2,2}}$ (cf. Meyer--Rivi\`{e}re \cite{MeyerR}, or \cite{strzelecki-bmo}), that both answers would be positive for uniformly bounded weak solutions of the simplified system $\Delta^2 u_k=G_k\nabla u_k$, with $G_k=G(u_k,\nabla u_k, \nabla^2 u_k)$, where $G$ is of the form \eqref{ex F}. It seems plausible that convergence of, say, biharmonic maps in $BMO$ prevents bubbling and loss of energy in the limit.
\end{remark}

The rest of the paper is organized as follows. In Section~2, we recall Uhlenbeck's result in the form that is used later on.  Section~3 forms the bulk of the paper. There, we first explain the strategy of the proof in more detail, and then carry out the necessary estimates, pass to the limit and remove the singularities of the limiting system. Finally, in Section~4 we give an example showing that \eqref{system_type} can have solutions in $C^1\setminus C^2$.

\section{Uhlenbeck's result}
\label{sec:U}

We remind that in our case we consider a mapping $u$ going from a ball $\bb^4 \subset \R^4$ into $\R^m$. Below we state the theorem of Uhlenbeck in a form adjusted to the situation. Note that if $\Omega \in W^{1,2}(\bb^4)$ then $\Omega \in L^4(\bb^4)$ because of the Sobolev imbedding theorem.

\begin{theorem} \label{Uhlenbeck}
There exist a number $\eps=\eps(m)>0$ and a constant $C>0$ such that for any $\Omega \in W^{1,2}(\bb^4; so(m)\otimes \Lambda^1\R^4)$ which satisfies
\begin{equation*}
\|\nabla \Omega\|_{L^2}+\|\Omega\|^{ 2}_{L^4}\leq \eps
\end{equation*}
there exist $P\in W^{2,2}(\bb^4, SO(m))$ and $\xi \in W^{2,2}(\bb^4,so(m)\otimes \Lambda^2 \R^4)$ such that
\begin{equation}\label{eq:uhlenbeck}
(dP)\, P^{-1}+P\Omega P^{-1}=\ast d\xi \qquad \text{on \;}  \bb^4
\end{equation}
and $d(\ast \xi)=0$ on $\bb^4$, $\xi|_{\partial \bb^4}=0$.

Moreover
\begin{equation}
	\label{est:P} \|\nabla^2 P\|_{L^2}+\|\nabla P\|_{L^4}+\|\nabla^2 \xi\|_{L^2}+\|\nabla \xi\|_{L^4}\les \|\nabla\Omega\|_{L^2}+\|\Omega\|^2_{L^4}.
\end{equation}

\end{theorem}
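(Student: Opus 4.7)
The statement is Uhlenbeck's gauge fixing theorem adapted to the Dirichlet problem on $\bb^4$, and my plan is to follow her classical continuity argument. First I would unravel the conclusion geometrically: writing $\tilde\Omega := (dP)P^{-1}+P\Omega P^{-1}$ for the gauge-transformed connection, the identity $\tilde\Omega = \ast d\xi$ with $d(\ast\xi)=0$ and $\xi|_{\partial \bb^4}=0$ automatically forces $\delta\tilde\Omega = 0$ in the interior (Coulomb gauge) with an induced tangential condition on $\partial\bb^4$. So the task is to produce $P\in W^{2,2}(\bb^4,SO(m))$ that brings $\Omega$ into Coulomb gauge.

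I would then run a continuity argument along the homotopy $\Omega_t:=t\Omega$, $t\in[0,1]$. Let $T\subset[0,1]$ denote the set of $t$ for which there exist $P_t \in W^{2,2}(\bb^4,SO(m))$ and $\xi_t \in W^{2,2}(\bb^4,so(m)\otimes\Lambda^2\R^4)$ with $\xi_t|_{\partial\bb^4}=0$ solving \eqref{eq:uhlenbeck} and satisfying \eqref{est:P} with a uniform constant; trivially $0\in T$ via $P_0\equiv I$, $\xi_0\equiv 0$. For openness near $t_0\in T$, I would write $P = P_{t_0}\exp S$ with $S$ a small $so(m)$-valued map and linearize $\delta\bigl((dP)P^{-1}+P\Omega_t P^{-1}\bigr)$: its principal part is the Hodge Laplacian on $so(m)$-valued $1$-forms, which under the boundary conditions inherited from $\xi|_{\partial\bb^4}=0$ is an elliptic isomorphism in $W^{2,2}$, so the implicit function theorem yields solutions for $t$ close to $t_0$.

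For closedness, I would take a sequence $t_k\to t_\infty\in\overline{T}$ with $(P_{t_k},\xi_{t_k})$ bounded in $W^{2,2}$ by \eqref{est:P}. Weak compactness together with the Sobolev embedding $W^{2,2}\hookrightarrow W^{1,4}\cap L^p$ (dimension $4$, all $p<\infty$) lets me pass to the limit in the bilinear terms $(dP_{t_k})P_{t_k}^{-1}$ and $P_{t_k}\Omega_{t_k}P_{t_k}^{-1}$, since each such product has one factor converging strongly in $L^4$ against a weak-$L^4$ factor. The a priori bound \eqref{est:P} itself would come by applying $\delta$ to \eqref{eq:uhlenbeck} and using $d(\ast\xi)=0$ to obtain a Hodge-type elliptic system for $\xi$ and $P$ whose right-hand side is controlled pointwise by $|\nabla\Omega|+|\nabla P|\,|\Omega|+|\Omega|^2+|\nabla P|^2$, after which standard elliptic regularity lets one bootstrap.

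The central technical point, and the only place where the smallness of $\eps$ is needed, is closing this bootstrap: the nonlinear terms $|\nabla P|^2$, $|\nabla P|\,|\Omega|$ and $|\Omega|^2$ must be absorbed into $\|\nabla^2 P\|_{L^2}+\|\nabla^2\xi\|_{L^2}$ on the left via the borderline embedding $W^{2,2}\hookrightarrow W^{1,4}$ in $\R^4$. Choosing $\eps=\eps(m)$ small enough that all these absorptions succeed secures both the continuity-method step and the final estimate \eqref{est:P}, and I expect this absorption to be the main obstacle — it is exactly the reason the smallness assumption cannot be dropped.
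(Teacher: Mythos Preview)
The paper does not prove this theorem; it is quoted as Uhlenbeck's result (with a pointer to Lemma~2.5 of \cite{Uhlenbeck} for the dependence of $\eps$ on the radius), and only the two corollaries after it are argued in the text. Your outline is the classical continuity argument from Uhlenbeck's original paper and is essentially correct as a sketch: the homotopy $\Omega_t=t\Omega$, openness via the implicit function theorem around the linearized Coulomb-gauge operator, closedness from the a~priori bound, and the absorption of the quadratic terms using the borderline embedding $W^{2,2}\hookrightarrow W^{1,4}$ in dimension~$4$ under the smallness assumption. So there is nothing to compare against in the paper itself; your proposal matches the standard proof the authors are citing.
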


Uhlenbeck's Theorem is, in fact, a local theorem in the sense that we can use it not only on the unit ball $\bb^4$, but on any ball, and, as long as we consider balls with uniformly bounded radii, we can choose the constant $\eps$ in an uniform way (i.e. independently on the radius of the ball). This is in accordance with the original use of this theorem to prove the existence of {\it global} Coulomb gauges on compact manifolds. Indeed, a look at the proof of Lemma 2.5 in \cite{Uhlenbeck} shows that we can choose $\eps=(2 (C_Pr+1)C_S)^{-1}$, where $C_P$ and $C_S$ are the constant in the Poincar\'e and Sobolev inequalities for the unit ball, and $r$ denotes the radius of the ball $B$. Thus $\eps=(2 (C_P\,R+1)C_S)^{-1}$ can be chosen as a uniform estimate for all balls with radius bounded by $R$.
\begin{corollary}\label{UhlCor2}
Theorem \ref{Uhlenbeck} holds for any ball $B\subset\bb^4$ in place of $\bb^4$, and the constant $\eps$ can be chosen uniformly for all such balls.
\end{corollary}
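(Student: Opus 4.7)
The plan is to reduce Corollary~\ref{UhlCor2} to Theorem~\ref{Uhlenbeck} by a rescaling argument, exploiting the fact that in dimension~$4$ every norm appearing in the hypothesis and in the conclusion of Uhlenbeck's theorem is invariant under the natural scaling of \eqref{eq:uhlenbeck}. Given a ball $B=B(x_0,r)\subset\bb^4$, I introduce the affine map $\phi\colon\bb^4\to B$, $\phi(y)=x_0+ry$, and pull $\Omega$ back to the unit ball as a $1$-form,
\[
\tilde\Omega(y):=r\,\Omega(\phi(y)).
\]
A change of variables, in which the Jacobian $r^4$ of $\phi$ cancels the factors of $r$ coming either from the component scaling of $\Omega$ or from the derivative, gives
\[
\|\tilde\Omega\|_{L^4(\bb^4)}=\|\Omega\|_{L^4(B)},\qquad \|\nabla\tilde\Omega\|_{L^2(\bb^4)}=\|\nabla\Omega\|_{L^2(B)},
\]
so the smallness hypothesis with a constant $\eps=\eps(m)$ transfers unchanged from $B$ to $\bb^4$.

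Applying Theorem~\ref{Uhlenbeck} to $\tilde\Omega$ on $\bb^4$ produces $\tilde P\in W^{2,2}(\bb^4,SO(m))$ and $\tilde\xi\in W^{2,2}(\bb^4, so(m)\otimes\Lambda^2\R^4)$ satisfying the full conclusion. I then transport these back to $B$ componentwise by setting $P(x):=\tilde P(\phi^{-1}(x))$ and $\xi(x):=\tilde\xi(\phi^{-1}(x))$, with \emph{no} extra scaling factor in either case; the right convention is the one that makes both sides of \eqref{eq:uhlenbeck} transform with matching powers of $r$. A short calculation using the chain rule then shows that the equation \eqref{eq:uhlenbeck}, the co-closedness $d(\ast\xi)=0$, and the Dirichlet boundary condition $\xi|_{\partial B}=0$ all descend from their $\tilde\cdot$-analogues on $\bb^4$, while the estimate \eqref{est:P} transfers verbatim because each of the four norms $\|\nabla^2 P\|_{L^2}$, $\|\nabla P\|_{L^4}$, $\|\nabla^2\xi\|_{L^2}$, $\|\nabla\xi\|_{L^4}$ is scale-invariant by the same change-of-variables computation already performed for $\Omega$.

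The main---if routine---obstacle is exactly this bookkeeping of pullback conventions: whereas one might naively expect a $2$-form $\xi$ to pick up a factor of $r^2$ under pullback, the PDE forces the convention in which each scalar component of $\xi$ transforms as a $0$-form. Once this convention is fixed, the same constant $\eps=\eps(m)$ from Theorem~\ref{Uhlenbeck} works uniformly for every ball $B\subset\bb^4$ with no dependence on the radius whatsoever, which is sharper than the corollary actually requires.
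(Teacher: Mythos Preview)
Your proof is correct and takes a genuinely different route from the paper. The paper does not argue by rescaling; instead it refers back to Uhlenbeck's original proof (specifically Lemma~2.5 in \cite{Uhlenbeck}) and observes that the smallness threshold there can be written explicitly as $\eps=(2(C_P r+1)C_S)^{-1}$, where $C_P$ and $C_S$ are the Poincar\'e and Sobolev constants on the unit ball and $r$ is the radius of $B$. Since $B\subset\bb^4$ forces $r\le 1$, one may then take $\eps=(2(C_P+1)C_S)^{-1}$ uniformly. Your scaling argument is more self-contained---it never opens Uhlenbeck's paper---and, as you note, yields the slightly stronger conclusion that a single $\eps=\eps(m)$ works for balls of \emph{all} radii, not merely those bounded by a fixed $R$. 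The paper's approach, by contrast, makes the $r$-dependence explicit, which could be useful if one needed to track constants. Your remark about the pullback convention for $\xi$ is correct in substance but slightly obscures the point: the cleanest way to say it is that you are transporting $\tilde\xi$ componentwise (as a collection of scalar functions) rather than as an honest $2$-form, and then checking directly that \eqref{eq:uhlenbeck} holds with the standard Euclidean Hodge star on $B$; this is exactly the computation you carried out, and it works.
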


Another corollary deals with the problem of weak continuity of $P$ and $\xi$ with respect to $\Omega$.
Note that Theorem \ref{Uhlenbeck} does not claim that either of them is defined uniquely.

\begin{corollary}\label{cor:2}
Suppose $\{ \Omega_k \}$ is a sequence in $W^{1,2}(\bb^4; so(m)\otimes \Lambda^1\R^4)$. Assume
\begin{equation}\label{Omega_k wto Omega}
\Omega_k \rightharpoonup \Omega \quad \text{in} \quad W^{1,2}.
\end{equation}
Assume that $P_k$ and $\xi_k$ are chosen so that \eqref{eq:uhlenbeck} and \eqref{est:P} of Theorem \ref{Uhlenbeck} hold with $\Omega_k$ in place of $\Omega$.

Then both $P_k$ and $\xi_k$ are uniformly bounded in $W^{2,2}$, and for any subsequence of $(\Omega_k)$ for which $P_k$ and $\xi_k$ are weakly convergent in $W^{2,2}$ to $P$ and $\xi$, respectively,  conditions \eqref{eq:uhlenbeck} and \eqref{est:P} of Uhlenbeck's Theorem do hold for $\Omega$, $P$ and $\xi$.
\end{corollary}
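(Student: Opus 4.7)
The plan is to extract a weakly convergent subsequence of $(P_k,\xi_k)$ in $W^{2,2}$, pass to the limit in \eqref{eq:uhlenbeck} termwise using the Rellich--Kondrachov compact embeddings available in dimension four, and verify the remaining constraints by standard weak-continuity arguments. Since $\Omega_k \rhu \Omega$ in $W^{1,2}$, the sequence $(\|\Omega_k\|_{W^{1,2}})$ is bounded, and by the Sobolev embedding $W^{1,2}(\bb^4)\hookrightarrow L^4(\bb^4)$ so is $(\|\Omega_k\|_{L^4})$. The assumed bound \eqref{est:P} applied with $\Omega_k$ then furnishes a uniform estimate for $\|P_k\|_{W^{2,2}}+\|\xi_k\|_{W^{2,2}}$, so that after extracting a subsequence we may assume $P_k \rhu P$ and $\xi_k\rhu \xi$ weakly in $W^{2,2}$.

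Next, I would pass to the limit in the rewritten form $(dP_k)P_k^{-1}+P_k\Omega_k P_k^{-1}=\ast d\xi_k$ of \eqref{eq:uhlenbeck}. Because $P_k\in SO(m)$ pointwise, $|P_k|\le \sqrt{m}$ and $P_k^{-1}=P_k^T$. The compact embeddings $W^{2,2}(\bb^4)\hookrightarrow L^q$ for every $q<\infty$ and $W^{1,2}(\bb^4)\hookrightarrow L^p$ for every $p<4$ imply that $P_k$ and $P_k^{-1}$ converge to $P$ and $P^T$ strongly in all $L^q$, while $\Omega_k\to\Omega$ strongly in $L^p$ for every $p<4$. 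Combining the weak $L^4$ convergence $dP_k\rhu dP$ with the strong convergence of $P_k^{-1}$ yields $(dP_k)P_k^{-1}\rhu (dP)P^{-1}$ weakly in $L^2$; the trilinear product $P_k\Omega_k P_k^{-1}$ converges strongly in $L^p$ for every $p<4$ thanks to the uniform $L^\infty$ bound on $P_k$; and $\ast d\xi_k \rhu \ast d\xi$ weakly in $L^4$. Hence \eqref{eq:uhlenbeck} passes to the limit in the distributional sense.

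Finally, the Coulomb constraint $d(\ast\xi_k)=0$ is linear and is therefore preserved in the distributional limit; the boundary condition $\xi_k|_{\partial\bb^4}=0$ persists because the trace operator $W^{2,2}(\bb^4)\to W^{3/2,2}(\partial\bb^4)$ is bounded and linear, hence weakly continuous. The estimate \eqref{est:P} for the limit pair is recovered by applying weak lower semicontinuity to each of the norms on its left-hand side and combining with the uniform upper bound on the right-hand side for the sequence $(\Omega_k)$; weak lower semicontinuity of $\|\nabla\Omega\|_{L^2}$ and $\|\Omega\|_{L^4}$ also shows that $\Omega$ itself satisfies the smallness hypothesis of Theorem~\ref{Uhlenbeck}. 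The main technical point, I expect, is the passage to the limit in the trilinear term $P_k\Omega_k P_k^{-1}$, where $\Omega_k$ converges only weakly in the critical Lebesgue space $L^4$; this is overcome precisely by exploiting the subcritical strong convergence $\Omega_k\to\Omega$ in $L^p$ for $p<4$, paired with the uniform $L^\infty$ bound on $P_k$ inherited from $P_k(x)\in SO(m)$.
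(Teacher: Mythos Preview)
Your proposal is correct and follows essentially the same route as the paper: uniform $W^{2,2}$ bounds from \eqref{est:P}, extraction of weakly convergent subsequences, and passage to the limit in \eqref{eq:uhlenbeck} via the Rellich--Kondrachov compact embeddings combined with the uniform $L^\infty$ bound on $P_k$ coming from $P_k(x)\in SO(m)$. The paper works with the equivalent form $dP_k+P_k\Omega_k=\ast d\xi_k\, P_k$ and observes that $dP_k$, $d\xi_k$, $\Omega_k$ all converge strongly in $L^{4-\delta}$, whereas you mix weak and strong convergence; both are fine, and your treatment of the boundary condition via weak continuity of the trace and of \eqref{est:P} via lower semicontinuity is slightly more explicit than the paper's ``obvious''.
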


In other words, the decomposition for the limit matrix $\Omega$ can be effectuated with (any) weak limit of the transformations $P_k$ and forms $\xi_k$.

Indeed, assume $\Omega_k \rhu \Omega$ w $W^{1,2}$ and that $\Omega_k$ satisfy, uniformly with respect to $k$, assumptions of Theorem \ref{Uhlenbeck}. We obtain then, by the theorem, transformations $P_k$ and $\xi_k$ such that
\begin{equation}\label{eq:def P_k}
dP_k +P_k\Omega_k =\ast d\xi_k \, P_k \qquad \text{on }\bb^4
\end{equation}
and
\begin{equation}\label{eq:oszac P_k i xi_k}
\|\nabla^2 P_k\|_{L^2}+\|\nabla P_k\|_{L^4}+\|\nabla^2 \xi_k\|_{L^2}+\|\nabla \xi_k\|_{L^4}\les \|\nabla\Omega_k\|_{L^2}+\|\Omega_k\|^2_{L^4}\leq M,
\end{equation}
where $M>0$ is a constant which is independent of $k$.

Since, by \eqref{eq:oszac P_k i xi_k}, the sequences $(\xi_k)$ and $P_k$ are bounded in $W^{2,2}$, we can choose subsequences (for simplicity still indexed by $k$)  such that
\begin{align*}
\xi_k \rhu \xi \qquad &\text{weakly in $W^{2,2}$,}\\
P_k \rhu P \qquad &\text{weakly in $W^{2,2}$ and a.e.}
\end{align*}
Thus (after again choosing subsequence) we may assume that
\begin{align*}
d \xi_k \to d \xi \qquad &\text{strongly in $L^{4-\delta}$,}\\
d P_k \to d P \qquad &\text{strongly in $L^{4-\delta}$,} \\
\Omega_k \to \Omega \qquad &\text{strongly in $L^{4-\delta}$}.
\end{align*}
for any small $\delta>0$.
Since $P_k$ are also uniformly bounded in $L^\infty$, we can take the $L^{4-\delta}$-limit on both sides of \eqref{eq:def P_k} obtaining
\begin{equation}\
dP +P\Omega =\ast\, d\xi \,  \qquad \text{ on }\bb^4 
\end{equation}
in the sense of distributions, which proves that $P$, $\xi$ and $\Omega$ indeed satisfy \eqref{eq:uhlenbeck}. The remaining estimates and boundary conditions in Theorem \ref{Uhlenbeck} are obvious.

This sort of continuity of the decomposition of $\Omega$, i.e. the fact that the transformation of the limit $\Omega$ may be attained by taking the weak limits of the elements of decomposition of $\Omega_k$ allows us later to estimate the $W^{2,2}$-norm of differences between the elements of decomposition of $\Omega_k$ and $\Omega$.

\section{Proof}

Let us first give a rough sketch and plan of the proof. The key idea is to prove that $u$ solves the limiting system  \eqref{system} outside a countable set of points and then remove these possible singularities with the use of a properly chosen test function.  A standard argument, cf. Section~\ref{preparation}, shows that $\bb^4\setminus A_1$, where $A_1$ is finite, can be covered by balls $B_j$ such that $\|\Omega_k\|_{W^{1,2}(B_j)}$ is small, so that Uhlenbeck's decomposition can be applied inside each $B_j$ separately. Next, in Section~\ref{transformation}, we fix one of the $B_j$ and, following the crucial ideas of Lamm-Rivi\`{e}re \cite{LammRiviere} and Struwe \cite{Struwe}, we use the equation $(dP_k)P_k^{-1}+P_k\Omega_kP_k^{-1}=\ast d\xi_k$ on $B_j$ to rewrite \eqref{system_k} as
\begin{equation}\label{system_k-intro}
\begin{split}
\Delta( P_k \Delta u_k) + \text{a perturbation}
=H_k\cdot \nabla u_k
 + \ast d\Delta \xi_k\cdot (P_k \nabla u_k)\qquad\text{on \;} B_j,
\end{split}
\end{equation}
where $H_k$ depends on $D_k,E_k,G_k, P_k,\Omega_k$ and their derivatives.

It is a purely routine matter to pass to the weak limit on the left hand side of \eqref{system_k-intro}.  On the right hand side, after passing to subsequences, $H_k$ is bounded in $L^{4/3}$ and converges strongly in all $L^s$ for $s<\frac 43$, and $\nabla u_k$ is bounded in $L^4$ by the Sobolev imbedding, and converges strongly in $L^q$ for all $q<4$. Thus, $H_k\cdot\nabla u_k$ is bounded in $L^1$ but a priori does not have to converge in $\mathcal{D}'$. The second term, $\ast d\Delta \xi_k\cdot (P_k \nabla u_k)$, presents a similar difficulty. To cope with that, we apply in Section~\ref{convergence} P.-L. Lions' concentrated compactness method \cite{Lions}, following earlier work of Freire, M\"{u}ller and Struwe \cite{Freire} on wave maps and harmonic maps, and the second and third author \cite{SZG} on $H$-systems. The key idea is to exploit the existence of \emph{second} order derivatives of $u$. This yields
\[
H_k\cdot \nabla u_k
 + \ast d\Delta \xi_k\cdot (P_k \nabla u_k) \longrightarrow
H\cdot \nabla u
 + \ast d\Delta \xi\cdot (P \nabla u) + S_j \qquad\text{on \;} B_j,
\]
where $H\cdot \nabla u
 + \ast d\Delta \xi\cdot (P \nabla u)$ is the desired term of the limit system \eqref{system} rewritten in the $(P,\xi)$ gauge, and $S_j$ is a combination of Dirac delta measures, supported on a countable subset  $A$ of $B_j$.

To complete the proof, in Section~\ref{removing} we show that each $S_j$ must be zero, using a capacity argument, based on the fact that $W^{2,2}(\R^4)$ contains unbounded functions.
Thus, the limit $u$ of $u_k$ satisfies \eqref{system} in $\bb^4\setminus A_1$; another application of the same argument shows that $A_1$ must be empty.

\subsection{Preparation to Uhlenbeck's transformation}
\label{preparation}

Since the sequence $\Omega_k$ is weakly convergent, it is bounded in $W^{1,2}$; we shall denote the bound on its norm by $M$.

In what follows, we want to cover the ball $\bb^4$ by balls $B_j$ in such a way, that in every of the $B_j$ we may, after passing to a subsequence, assume that $\|\Omega_k\|_{W^{1,2}(B_j)}\leq \eps$, where $\eps$ is as in Corollary \ref{UhlCor2} to Uhlenbeck's Theorem. This might not be possible for the whole $\bb^4$, but it is outside a finite set of points. To visualize this better, replace $\bb^4$ by a four-dimensional cube and consider its dyadic decomposition into cubes $C_{i,j}$, where the second subscript $j$ counts subsequent cubes of a specified generation $i$.

A cube $C$ is {\it bad}, if one cannot choose a subsequence of $\Omega_k$ such that its $W^{1,2}$-norm is bounded on $C$ by $\eps$, i.e. if, for all $k$ sufficiently large, $\|\Omega_k\|_{W^{1,2}(C)}>\eps$. Notice that, in every generation of the dyadic decomposition, the number of {\it bad} cubes $C_{i,j}$ is bounded by the same constant $N=\lceil M \eps^{-1} \rceil$, and that if a cube is not {\it bad} (i.e. it is {\it good}), neither are all its descendants. The intersection of all {\it bad} cubes,
\begin{equation}
	\label{first bad}
	A_1:=\bigcap_{i=1}^\infty \biggl(\bigcup_{1\le j\le N} \text{bad cubes }C_{i,j}\biggr)
\end{equation}
is a finite set of (at most $N$) points, and any point not in $A_1$ lies in a cube that is {\it good}.

These points are, in fact, accumulation points of the $W^{1,2}$-norm of the weakly convergent sequence $(\Omega_k)$, more precisely --- these accumulation points, for which the energy loss exceeds~$\eps$.

For the next two subsections, let us fix an arbitrary {\it good} ball $B=B_j$ contained in $\bb^4\setminus A_1$. We consider (not changing the notation, to keep things simple), instead of the whole sequence $u_k$, only the subsequence of $u_k$ (and of $D_k$, $E_k$, $G_k$ and $\Omega_k$) for which $\|\Omega_k\|_{W^{1,2}(B_j)}\leq \eps$. By Corollary \ref{UhlCor2} we can assume that the Uhlenbeck's theorem holds for $B$, so there exist $P_k$ and $\xi_k$ such that $dP_k\,P_k^{-1}+P_k\Omega_k P_k^{-1}=\ast d\xi_k$.

\subsection{Transformation of the equation: calculations}
\label{transformation}

The calculations below follow closely and with more detail the brief calculations by Struwe in \cite{Struwe}. We provide them for the reader's convenience, and also because we shall need some more knowledge on the structure of certain terms in our reasoning.

We recall the indices of the multidimensional objects that appear in our system:
\begin{gather*}
u=(u^i), \qquad \nabla u=(\partial_\alpha u^i), \qquad
P=(P^{ij}), \qquad D=(D^{ij}_\alpha ),\\
E=(E^{ij}_{\alpha \beta }), \qquad G=G^{ij}_\alpha,
\qquad \Omega =(\Omega^{ij}_\alpha ),
\end{gather*}
with $i,j=1,\ldots,n$ and $\alpha ,\beta ={1,2,3,4}$. To simplify the notation without making the calculations ambiguous we shall use the standard summation conventions.

Furthermore, it is often convenient to omit at least some of the indices. In that case,
\begin{itemize}
\item multiplication of tensor objects that is denoted by a dot ($\cdot$) is a~standard scalar product in the $\R^4\times\R^m$ space, e.g.
\begin{equation*}
\begin{split}
E\cdot\nabla u&=\left(E^{ij}_{\alpha \beta }\partial_\beta u^j\right)^i_\alpha =\biggl(\sum_{\beta =1}^4\sum_{j=1}^n E^{ij}_{\alpha \beta }\partial_\beta u^j\biggr)^i_\alpha \\
G\cdot \nabla u&=\left(G^{ij}_\alpha \partial_\alpha u^j\right)^i=\biggl(\sum_{\alpha  =1}^4\sum_{j=1}^n G^{ij}_{\alpha}\partial_\alpha  u^j\biggr)^i
\end{split}
\end{equation*}
\item multiplication of tensor objects that is not denoted by any operator sign is standard matrix multiplication, e.g.
$$
P\Delta^2 u=\left(P^{ij}\Delta^2u^j\right)_i=\biggl(\sum_{j=1}^n P^{ij}\Delta^2 u^j\biggr)_i
$$
\item tensor multiplication ($\otimes$) denotes tensor product in $\R^4$ (and then, possibly, matrix multiplication in the coordinates), e.g.
\begin{equation*}
\begin{split}
\nabla P\otimes\nabla u&=\nabla P^{ij}\otimes \nabla u^j=\left( \left(\partial_\alpha P^{ij}\right)_\alpha \otimes \left(\partial_\beta u^j\right)_\beta\right)^i\\
&=\biggl(\sum_{j=1}^n \partial_\alpha P^{ij}\partial_\beta u^j\biggr)^i_{\alpha \beta }.
\end{split}
\end{equation*}
\end{itemize}

Below, we transform the system \eqref{system_k} for $u_k$. In the calculations that follow we omit the index $k$ (one should not confuse this temporary notational simplification with the claim that $u$ -- a weak limit of $(u_k)$ -- satisfies \eqref{system}; proving this is the goal of our paper).

Applying $P$ to the Laplacian on the left-hand side of the system, we obtain
\begin{equation*}
\begin{split}
\Delta (P\Delta u)^i
&=\Delta (P^{ij}\Delta u^j)=
\partial_\alpha \partial_\alpha (P^{ij}\partial_\beta \partial_\beta u^j)\\
&=P^{ij}\partial_\alpha\partial_\alpha\partial_\beta \partial_\beta u^j
+\partial_\beta \partial_\alpha \partial_\alpha P^{ij}\partial_\beta u^j
-\partial_\beta (\partial_\alpha \partial_\alpha P^{ij}\partial_\beta u^j)\\
&\quad +2\partial_\alpha \partial_\beta (\partial_\alpha P^{ij}\partial_\beta u^j)-2\partial_\alpha (\partial_\beta \partial_\alpha P^{ij} \partial_\beta u^j),
\end{split}
\end{equation*}
which can be rewritten shortly as
\begin{equation}\label{eq:star}
\begin{split}
\Delta (P\Delta u)^i=(P\Delta^2 u)^i &+ (\nabla (\Delta P)\cdot\nabla u)^i-\dv (\Delta P\nabla u)^i\\
&+2\dv^2(\nabla P\otimes \nabla u)^i-2\dv(\nabla^2P\cdot \nabla u)^i.
\end{split}
\end{equation}
By the equation \eqref{system_k} satisfied by $u_k$ we have, still omitting the index $k$ for the sake of simplicity,
$$
(P\Delta ^2 u)^i=P^{ij}\Delta (D\cdot \nabla u)^j+P^{ij}\dv(E\cdot \nabla u)^j+P^{ij}(G\cdot \nabla u)^j+P^{ij}(\Delta \Omega \cdot \nabla u)^j,
$$
which can be rewritten as
\begin{equation}\label{eq:twostar}
\begin{split}
P\Delta ^2 u=\Delta (PD\cdot \nabla u)
&+\dv\left((PE-2\nabla P D)\cdot \nabla u\right)\\
&+(\Delta PD+PG+P\Delta \Omega -\nabla P\cdot E)\cdot\nabla u.
\end{split}
\end{equation}
Substituting \eqref{eq:twostar} into \eqref{eq:star}, after some rearranging, yields
\begin{equation}
	\label{eq:threestar}
\begin{split}
\Delta (P\Delta u)=\Delta &(PD\cdot \nabla u)+2\dv^2(\nabla P\otimes \nabla u)\\
&+\dv\left((PE-2\nabla P D-2\nabla^2P\cdot \nabla u)\cdot \nabla u-\Delta P\nabla u\right)\\
&+(\Delta PD+PG+P\Delta \Omega +\nabla(\Delta P)-\nabla P\cdot E)\cdot \nabla u.
\end{split}
\end{equation}
Define
\begin{equation}\label{eq:faleczki}
\begin{split}
\tD_\alpha^{il}&=P^{ij}D^{jl}_\alpha \\
\tE^{il}_{\alpha \beta }&=P^{ij}E^{jl}_{\alpha \beta }-2\partial_\alpha P^{ij}D^{jl}_\beta -2\partial_\alpha \partial_\beta P^{il}-\delta_{\alpha \beta}\Delta P^{il}\\
H&=\Delta PD+PG+P\Delta \Omega +\nabla (\Delta P)-\nabla P\cdot E-\ast d\Delta \xi P,
\end{split}
\end{equation}
where, as before, the Roman lowercase indices run from $1$ to $n$, and the Greek indices from $1$ to $4$; $\delta_{\alpha \beta }$ denotes Kronecker's delta.

With this notation, we rewrite \eqref{eq:threestar} as
\begin{equation}
	\label{system_k_transformed}
	\begin{split}
\Delta (P\Delta u)=\Delta (\tD\cdot\nabla u)& +2\dv^2(\nabla P\otimes\nabla u)\\ &+\dv(\tE\cdot\nabla u)+H\cdot\nabla u+\ast d\Delta \xi \cdot P\nabla u.
\end{split}
\end{equation}
We shall need the precise form and integrability properties of terms that appear in $H$. By Theorem \ref{Uhlenbeck},
$$
\ast d\Delta \xi P=\Delta \left((\nabla P+P\Omega )P^{-1}\right)P,
$$
thus
$$
H=\Delta PD+PG+P\Delta \Omega +\nabla (\Delta P)-\nabla P\cdot E-\Delta \left((\nabla P+P\Omega )P^{-1}\right)P,
$$
and after some simple reductions we get
\begin{equation*}
\begin{split}
H^{ij}_\beta =\Delta P^{il}D^{lj}_\beta &+P^{il} G^{lj}_\beta-\partial_\alpha P^{il}E^{lj}_{\alpha \beta }-\Delta P^{il}\Omega^{lj}_\beta \\
&-2\Big(\partial_\alpha P^{il}\partial_\alpha \Omega^{lj}_\beta +\partial_\alpha \partial_\beta P^{il}\partial_\alpha (P^{-1})^{ls}P^{sj}\\
&{}\qquad+\partial_\alpha P^{il}\Omega^{lt}_\beta \partial_\alpha (P^{-1})^{ts}P^{sj}+P^{il}\partial_\alpha \Omega^{lt}_\beta \partial_\beta (P^{-1})^{ts}P^{sj}\Big)\\
&-\partial_\beta P^{il}\Delta (P^{-1})^{ls}P^{sj}-P^{il}\Omega ^{lt}_\beta \Delta (P^{-1})^{ts}P^{sj},
\end{split}
\end{equation*}
or, in the simplified notation, without the jungle of indices,
\begin{equation}\label{struct. of H}
\begin{split}
H=\Delta PD & +PG-\nabla P\cdot E-\Delta P\Omega\\
&-2\Bigl(\nabla P\cdot\nabla \Omega +(\nabla^2 P\cdot \nabla (P^{-1}))P\Bigr.\\ &\Bigl.{}\qquad+
 (\nabla P\Omega )\cdot\nabla (P^{-1})P+P(\nabla \Omega \cdot\nabla (P^{-1}))P\Bigr)\\
&-\nabla P\Delta (P^{-1})P-P\Omega \Delta (P^{-1})P.
\end{split}
\end{equation}
Please bear in mind that in fact we shall use equations \eqref{eq:faleczki}--\eqref{struct. of H} for each $k$, adding the subscript $k$ to all letters $H,D,E,G,P,\Omega,\xi,\tilde D, \tilde E$.

From now on we shall return to using the index $k$, where appropriate.

The bounds for $D_k$, $E_k$, $G_k$ and $\Omega_k$ together with the estimates for $P_k$ and $\xi_k$ given by Theorem \ref{Uhlenbeck} imply that $H_k\in L^{4/3}$ and
\begin{equation}\label{est. on H_k}
\|H_k\|_{L^{4/3}}\le C=C\Big(\sup_k\max\big(\|D_k\|_{W^{1,2}},\|E_k\|_{L^2},\|G_k\|_{L^{4/3}},\|\Omega_k\|_{W^{1,2}}\big)\Big)\, .
\end{equation}
To check this, one just uses H\"older's inequality, and -- when it is appropriate -- the Sobolev imbedding $W^{1,2}\subset L^4$ in dimension $4$; we leave the details to the reader.
\begin{remark*} If  the coefficients $D_k=D(u_k,\nabla u_k)$, $E_k=E(u_k,\nabla u_k,\nabla^2 u_k)$, $G_k=G(u_k,\nabla u_k)$ and $\Omega_k=\Omega(u_k,\nabla u_k)$ are given by the composition of fixed smooth functions with the~$u_k$ and their derivatives, and satisfy the growth conditions \eqref{Gr E}, then a computation yields
	\[
	\|H_k\|_{L^{4/3}}\lesssim \|u_k\|_{W^{2,2}}^{3/2} +\|\nabla u_k\|_{L^4}^3\, .
	\]
We do not rely on that particular estimate, though.
\end{remark*}

Let  $H$ be defined analogously to $H_k$, i.e.\ by formula \eqref{struct. of H}.\footnote{This time the subscript $k$ in \eqref{struct. of H} is \emph{really} omitted, not just for the sake of simplicity!} The convergence of all the terms in the right hand side of the formula for $H_k$ is such that, up to a subsequence, we can assume
\begin{equation}
	\label{Hk strong}
	H_k\rhu H\quad \text{ in }L^q\text{ for all }1\le q< \frac{4}{3}.
\end{equation}
To see this, we just use the elementary observation: \emph{if $\frac 1p + \frac 1q=\frac 1r$, $p,q,r>1$, and $f_k\to f$ in $L^p$ and $g_k\rhu g$ in $L^q$, then $f_kg_k\rhu fg$ in $L^r$,} combining it with the imbedding $W^{2,2}\subset W^{1,4}$ and Rellich-Kondrashov's compactness theorem.

Moreover, by the estimates on $H_k$, we can once again choose a~subsequence of $H_k$ that is convergent weakly in $L^{4/3}$.

We write out the weak formulation of \eqref{system_k} in $B_j$, using its transformed form \eqref{system_k_transformed}, and separating the terms into `easy' (left hand side) and `hard' (right hand side): the identity
\begin{equation}\label{weaksystem_k}
\begin{split}
\int P_k \Delta u_k\Delta \psi
&-\int (\tD_k\cdot \nabla u_k)\Delta \psi \\
&+ \int \Big[2\dv(\nabla P_k\otimes\nabla u_k)+(\tE_k\cdot \nabla u_k)\Big]
\cdot \nabla \psi \\
&=\int(H_k\cdot \nabla u_k)\psi
+\int\ast d\Delta \xi_k\cdot (P_k \nabla u_k)\psi
\end{split}
\end{equation}
holds for each smooth map $\psi$ compactly supported in $B=B_j$. (Since $\xi_k$ is only of class $W^{2,2}$, the last term has to be interpreted using one integration by parts.)

\subsection{Convergence of \eqref{weaksystem_k}}
\label{convergence}

We consider the left hand side and the right hand side separately. (The key difficulty is to prove that right hand sides converge to the appropriate limit).

\subsubsection{Convergence of the left hand side of \eqref{weaksystem_k}}
By assumption, $u_k\rhu u$ in $W^{2,2}$, thus, after passing to a subsequence, we may assume that
\begin{equation*}
\begin{split}
u_k\to u \quad &\text{ in }L^s \text{ for all }1\leq s<\infty,\\
\nabla u_k\rhu \nabla u \quad &\text{ in }L^4,\\
\nabla u_k\to \nabla u \quad &\text{ in }L^p \text{ for all }1\leq p<4,\\
\nabla^2 u_k\rhu \nabla^2 u \quad &\text{ in }L^2.
\end{split}
\end{equation*}

Taking into account the bounds for $D_k$, $E_k$, $G_k$ and $\Omega_k$ we can also assume that (again, after passing to a subsequence) there exist $D$, $E$, $G$ and $\Omega $ such that
\begin{gather*}
D_k\rhu D \quad\text{and}
\quad \Omega_k\rhu \Omega\quad \text{ in }W^{1,2},\\
E_k\rhu E \quad \mbox{in $L^p$ for all $1\le p< 2$},\qquad
 G_k\rhu G \quad \mbox{in $L^s$ for all $1\le s< 4/3$}.
\end{gather*}

Similarly, Theorem \ref{Uhlenbeck} gives uniform estimates on $P_k$ and $\xi_k$, which allow us to assume (after passing to subsequences) that
\begin{equation*}
\begin{split}
(P_k,\xi_k)\rhu (P,\xi) \quad &\text{ in }W^{2,2},\\
\end{split}
\end{equation*}
and, by the Rellich--Kondrashov's compactness theorem, $\nabla P_k\to\nabla P$ and $\nabla \xi_k\to \nabla \xi$ strongly in $L^s$ for all $1\le s< 4$, and $P_k\to P$ in $L^s$ for all $1\le s< \infty$.
The limits $P$ and $\xi $ of $P_k$ and $\xi_k$ satisfy the claim of Corollary~\ref{cor:2} on~$B=B_j$. In particular,
$$
dP P^{-1}+P\Omega P^{-1}=\ast d\xi.
$$
By H\"older's inequality, it follows from all these convergence assumptions that $\nabla P_k\otimes\nabla u_k\to \nabla P\otimes \nabla u $ in $\mathcal{D}'$,
\[\tD_k\cdot \nabla u_k=P_kD_k\cdot \nabla u_k\to PD\cdot \nabla u =\tD\cdot \nabla u \qquad\mbox{in $\mathcal{D}'$},
\]
and finally,
\[
\tE_k\cdot \nabla u_k\to \tE\cdot \nabla u
\qquad\mbox{in $\mathcal{D}'$}
\]
(to check this, see the \eqref{eq:faleczki} for relation between $\tE$ and $D, E,$ and $P$).

Thus, up to passing to a subsequence, for each fixed test map $\psi$ the left hand side of \eqref{weaksystem_k} converges to
$$
\int P\Delta u \Delta \psi -\int (\tD\cdot \nabla u)\Delta \psi +\int\left[ 2\dv(\nabla P\otimes\nabla u)+\tE\cdot\nabla u\right]\cdot\nabla \psi .
$$

\subsubsection{Convergence of the right hand side of \eqref{weaksystem_k}}
Now let us concentrate on the convergence of the right hand side of \eqref{weaksystem_k}, i.e. of
\begin{equation}\label{rhs of weaksystem}
\int (H_k\cdot\nabla u_k)\psi +\int \ast d\Delta \xi_k\cdot (P_k \nabla u_k)\psi.
\end{equation}
This is the heart of the matter. A priori, by H\"{o}lder's inequality, $H_k\cdot \nabla u_k$ is bounded just in $L^1$, and there is no simple means of passing to the limit, as \emph{neither} $H_k\to H$ in $L^{4/3}$, \emph{nor} $\nabla u_k\to \nabla u$ in $L^4$; the convergence in both cases is weak. The second term presents a similar problem.
To circumvent this difficulty,  we shall study the convergence of \eqref{rhs of weaksystem} using the concentration--compactness method of P.-L.~Lions. To deal with the first term in \eqref{rhs of weaksystem}, we exploit the fact that $u_k$ has \emph{second} order derivatives and use Sobolev inequality. To cope with the second term, we proceed in a similar way, employing also the equations satisfied by $P$ and $\xi$, and integration by parts.

We define the auxiliary distributions $T_k$ by
\begin{equation}\label{def T_k}
\left\langle T_k,\psi \right\rangle =\int (H_k\cdot\nabla u_k)\psi +\int \ast d\Delta \xi_k\cdot (P_k \nabla u_k)\psi, \quad  \psi\in \cC_o^\infty(B,\R^n).
\end{equation}
Coordinatewise, we write $T_k=(T_k^1,\ldots,T_k^n)$, with $T_k^i\in \mathcal{D}'(\R^n)$ given by
\begin{equation*}
\left\langle T_k^i,\phi \right\rangle =\int (H^i_k\cdot\nabla u_k)\phi +\int \ast d\Delta \xi^i_k\cdot (P_k \nabla u_k)\phi, \quad  \phi\in \cC_o^\infty(B).
\end{equation*}
We define the distribution $T$ by a formula analogous to \eqref{def T_k}, omitting the index $k$ everywhere.

Let us recall again that, for $i=1,\ldots,n$,
\begin{eqnarray*}
H^{i}&=&\left(H^{ij}_\alpha \right),\\
\ast d\xi^i&=&\left(\ast d\xi\right)^{ij}_\alpha,
\end{eqnarray*}
with $j=1,\ldots,n$ and $\alpha =1,\ldots,4$.
In view of that we have
\begin{eqnarray*}
\phi H^i\cdot\nabla u&=&\phi H^{ij}_\alpha \partial_\alpha u^j\\
\phi \; \ast d\Delta \xi^i\cdot P\nabla u&=&\phi \left({}\ast d \Delta \xi \right)^{ij}_\alpha P^{jk}\partial_\alpha u^l.
\end{eqnarray*}
\begin{lemma}\label{prawastr}
There exists a subsequence $T_{k'}$ out of $T_k$ that
$$
T^i_{k'}\to T^i+\sum_{l\in J}a_{li}\delta_{x_{li}}\quad \text{ in }\mathcal{D}'(\R^n), \qquad i=1,\ldots, n,
$$
where $J$ is (at most) countable, $a_{li}\in \R$, $x_{li}\in \bb^4$ and $\sum_{l\in J}|a_{li}|^{4/5}<\infty$ for $i=1,\ldots, n$.
\end{lemma}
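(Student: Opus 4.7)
The plan is to extract a subsequence along which $T_k^i$ converges in $\mathcal{D}'$ to $T^i$ plus a purely atomic Radon measure, whose atoms are $4/5$-summable. The argument has four steps: (i)~weak-$\ast$ compactness of $(T_k^i)$ in the Radon measures; (ii)~location of the concentration set via Lions' concentration-compactness lemma applied to $\nabla u_k\in W^{1,2}(B,\R^m)$; (iii)~verification that the limit coincides with $T^i$ off this set; and (iv)~a H\"older estimate on the atomic masses in which the exponent $4/5$ emerges from balancing the available summability data.

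\textbf{Step 1 (Compactness).} First I would show that $T_k^i$ is bounded as a sequence of Radon measures on $B$. The first contribution $H_k\cdot\nabla u_k$ belongs to $L^1$ by H\"older ($L^{4/3}\cdot L^4$). For the second, one integration by parts (legitimate since $\Delta\xi_k\in L^2$ and $P_k\nabla u_k\in W^{1,2}$) turns $\int\ast d\Delta\xi_k\cdot(P_k\nabla u_k)\psi$ into an $L^2\cdot L^2$ pairing of $\Delta\xi_k$ with $\nabla(P_k\nabla u_k\psi)$. Banach--Alaoglu thus yields a subsequence $T^i_{k'}\rhu \sigma^i$ weakly-$\ast$. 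Simultaneously, Lions' concentration-compactness lemma applied to $\nabla u_k$, bounded in $W^{1,2}(B,\R^m)$ and critically embedded in $L^4$, supplies on a further subsequence a countable set $\{x_l\}_{l\in J}\subset B$ and masses $\lambda_l\ge 0$ with
\[
|\nabla u_{k'}|^4\rhu |\nabla u|^4+\sum_{l\in J}\lambda_l\delta_{x_l},\qquad \sum_{l\in J}\lambda_l^{1/2}<\infty.
\]

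\textbf{Step 2 (Identification of the regular part).} Away from $\{x_l\}$, $\nabla u_{k'}\to\nabla u$ strongly in $L^4_{\mathrm{loc}}$ (no concentration there). Combined with the weak $L^{4/3}$-convergence of $H_{k'}$ (from its uniform bound) and the strong $L^p_{\mathrm{loc}}$-convergence ($p<4$) of $P_{k'},\nabla P_{k'},\nabla\xi_{k'}$ (Rellich applied to $W^{2,2}$), the estimates of Section~\ref{transformation} give $\langle T^i_{k'},\psi\rangle\to\langle T^i,\psi\rangle$ for every $\psi\in C_c^\infty(B\setminus\{x_l\})$. Hence $\sigma^i-T^i$ is concentrated on $\{x_l\}$ and, being a countably-supported Radon measure, takes the form $\sum_l a_{li}\delta_{x_l}$.

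\textbf{Step 3 (Summability).} Pick smooth cutoffs $\psi_{l,\epsilon}\in C_c^\infty(B_\epsilon(x_l))$ with $\psi_{l,\epsilon}(x_l)=1$ so that $a_{li}=\lim_{\epsilon\to 0}\lim_{k'}\langle T^i_{k'}-T^i,\psi_{l,\epsilon}\rangle$. H\"older bounds the first-term contribution by
\[
\|H_{k'}-H\|_{L^{4/3}(B_\epsilon)}\|\nabla u_{k'}\|_{L^4(B_\epsilon)}+\|H\|_{L^{4/3}(B_\epsilon)}\|\nabla u_{k'}-\nabla u\|_{L^4(B_\epsilon)},
\]
which in the limit gives $|a_{li}|\les\alpha_l^{3/4}\lambda_l^{1/4}$, where $\alpha_l\ge 0$ denotes the atom of the weak-$\ast$ limit of $|H_{k'}-H|^{4/3}$ at $x_l$ (so $\sum_l\alpha_l\le\sup_{k'}\|H_{k'}-H\|_{L^{4/3}}^{4/3}<\infty$). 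H\"older's inequality for series with conjugate exponents $5/3$ and $5/2$ then yields
\[
\sum_l|a_{li}|^{4/5}\les\sum_l\alpha_l^{3/5}\lambda_l^{1/5}\le\Bigl(\sum_l\alpha_l\Bigr)^{3/5}\Bigl(\sum_l\lambda_l^{1/2}\Bigr)^{2/5}<\infty.
\]
The arithmetic $\tfrac{3}{5}\cdot\tfrac{5}{3}=1$ and $\tfrac{1}{5}\cdot\tfrac{5}{2}=\tfrac{1}{2}$ is exactly what singles out the exponent $4/5$: any larger exponent would fail to balance the $\ell^1$-summability of $(\alpha_l)$ against the Lions $\ell^{1/2}$-summability of $(\lambda_l)$.

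\textbf{Main obstacle.} The hard part will be securing the same $4/5$-summability for the second term's atomic contribution. After integration by parts, the dangerous piece is the $L^2\cdot L^2$ pairing $\Delta\xi_k\cdot\nabla^2 u_k$ (the $\nabla P_k\cdot\nabla u_k$ piece being essentially harmless, since $\nabla P_k\to\nabla P$ strongly in $L^4_{\mathrm{loc}}$). To bound these atoms in $\ell^{4/5}$, I would exploit the Uhlenbeck identity $\ast d\xi_k=dP_k\,P_k^{-1}+P_k\Omega_k P_k^{-1}$ to re-express $\Delta\xi_k$ via products of $dP_k,d\Omega_k,\Omega_k,P_k^{\pm 1}$, and invoke Lions' lemma applied also to $\nabla\xi_k,\nabla P_k,\Omega_k\in W^{1,2}(B)\subset L^4$; an analogous three-factor H\"older inequality (again with the exponent $5/3$, $5/2$ balance) delivers the required summability of the second term's atoms, completing the atomic decomposition claimed in the lemma.
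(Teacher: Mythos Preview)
Your overall strategy differs from the paper's. You locate concentration points \emph{first}, by applying Lions' Sobolev concentration--compactness to $\nabla u_k\in W^{1,2}$, and then argue about atoms at those points. The paper never tries to locate the atoms in advance: it proves directly, by H\"older and Sobolev manipulations, the \emph{reverse H\"older} estimate
\[
\bigl|\langle T_k^i-T^i,\phi^5\rangle\bigr|\lesssim \Bigl(\int\phi^4\,d\mu_k\Bigr)^{5/4}+o(1)
\]
for a bounded sequence of nonnegative measures $\mu_k$ (built from $|H_k|^{4/3}$, $|\nabla^2 u_k|^2$, $|\nabla^2\xi_k|^2$, $|\nabla P_k|^4$, etc.), and then invokes the abstract Lions Lemma~1.2 to conclude that the limit $\nu$ of $T_k^i-T^i$ is purely atomic with $\ell^{4/5}$-summable masses. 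This avoids entirely the question of where the atoms sit, and in particular never relies on any strong $L^4_{\mathrm{loc}}$ convergence of $\nabla u_k$ or $\nabla P_k$.

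Your proposal has a genuine gap in the treatment of the second term. You identify the ``dangerous piece'' as the $L^2\cdot L^2$ pairing $\Delta\xi_k\cdot\nabla^2 u_k$ and propose to cure it by re-expressing $\Delta\xi_k$ through the Uhlenbeck relation. But this term never actually appears. The one-form $\ast d\Delta\xi_k^{ij}$ is divergence-free (since $d^2=0$), so $\int\ast d\Delta\xi_k\cdot\nabla F=0$ for any scalar $F$; equivalently, in the wedge formulation, $d(du_k^l)=0$ kills the would-be $\nabla^2 u_k$ contribution. After the correct integration by parts one is left only with triple products of the type $\Delta\xi_k\wedge dP_k\wedge du_k$ and $\Delta\xi_k\wedge P_k\,du_k\wedge d\psi$, i.e.\ $L^2\cdot L^4\cdot L^4$ with \emph{first} derivatives only. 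Your proposed fix---rewriting $\Delta\xi_k$ via $dP_k$, $d\Omega_k$---would not have helped anyway: $\Delta\xi_k$ inevitably contains $\nabla^2 P_k$ and $\nabla\Omega_k$, which are only $L^2$, so you would still be facing an $L^2\cdot L^2$ product with $\nabla^2 u_k$. Once you use the divergence-free structure, your route becomes viable (you would then also need Lions applied to $\nabla P_k\in W^{1,2}$ to get strong $L^4_{\mathrm{loc}}$ convergence of $dP_k$ off a countable set, as you hint), but as written the obstacle is misdiagnosed and the proposed remedy addresses a phantom term.
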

\begin{proof}
Our aim is to prove that there exist non-negative, uniformly bounded Borel measures $\mu_k$ such that for any $\phi \in \cC_o^\infty(B)$ and $i=1,\ldots,n$ we have
\begin{equation}
\left| \left\langle T_k^i-T^i,\phi^5\right\rangle\right|\lesssim \left(\int \phi^{ 4} \, d\mu_k\right)^{5/4}+o(1)\qquad\mbox{as}\quad k\to\infty.
\end{equation}
This will allow us to use the method of concentrated compactness.

We have
\begin{equation}
\begin{split}
\left\langle T^i_k-T^i,\phi ^5\right\rangle &=\int \phi^5 \left(H^i_k\cdot\nabla u_k-H^i\cdot\nabla u\right)\\&{}\qquad+\int\phi^5\left(\ast d \Delta \xi^i_k\cdot P_k \nabla u_k-\ast d\Delta \xi^i\cdot P\nabla u\right)\\
&=A+B.
\end{split}
\end{equation}
Each of the integrals shall be dealt with separately.

\medskip
\noindent
\textbf{Estimate of $A$:} we split this integral into two,
$$
A=\int \phi^5 H^i_k\cdot (\nabla u_k-\nabla u)+\int \phi^5(H^i_k-H^i)\cdot \nabla u=\mathit{I}+\mathit{II}.
$$
For $\mathit{I}$ we have
$$
\mathit{I}=\int \phi^3 H^i_k\cdot \nabla \left(\phi^2(u_k-u)\right)-2\int \phi^4H^i_k\cdot \left(\nabla \phi \otimes(u_k-u)\right).
$$
We have assumed that $u_k\to u$ in $L^s$ for $1\leq s<\infty$ and that $\nabla u_k\to \nabla u$ in $L^p$ for $1\leq p<4$, moreover, we know that the $H_k$ are uniformly bounded in $L^{4/3}$. By H\"older's and Sobolev's inequalities
\begin{equation}
\begin{split}
|\mathit{I}|&\leq \int |\phi |^3|H_k||\nabla (\phi^2(u_k-u))|+o(1)\\
&\leq \left(\int \phi^4 |H_k|^{4/3}\right)^{3/4}\left(\int\left|\nabla \left(\phi^2(u_k-u)\right)\right|^4\right)^{1/4}+o(1)\\
&\lesssim \left(\int \phi^4 |H_k|^{4/3}\right)^{3/4}\left(\int\left|\nabla^2 \left(\phi^2(u_k-u)\right)\right|^2\right)^{1/2}+o(1)\\
&\lesssim \left(\int \phi^4 |H_k|^{4/3}\right)^{3/4}\left(\int\phi^4 \left|\nabla^2 (u_k-u)\right|^2\right)^{1/2}+o(1)\\
&\lesssim \left(\int \phi^4 \left(|H_k|^{4/3}+|\nabla^2u_k|^2+|\nabla ^2u|^2\right)\right)^{5/4}+o(1).
\end{split}
\end{equation}
With $\mathit{II}$ we proceed in a similar way:
\begin{equation}
\begin{split}
\mathit{II}&=\int \phi^3(H^i_k-H^i)\cdot \nabla (\phi^2u)-2\int \phi^4(H^i_k-H^i)\cdot (\nabla \phi \otimes u)\\
&=\mathit{II_a}+\mathit{II_b}.
\end{split}
\end{equation}
We estimate $\mathit{II_a}$ in the same way as we did with $I$:
$$
|\mathit{II_a}|\lesssim \left(\int \phi^4\left(|H_k|^{4/3}+|H|^{4/3}+|\nabla^2 u|^2\right)\right)^{5/4}+o(1).
$$
The integral $\mathit{II_b}$ converges (up to choosing a subsequence) to zero, since $u\in L^s$ for any $s\geq 1$ and $H_k\rhu H$ in $L^q$ for $q<\frac{4}{3}$.

\medskip
\noindent
\textbf{Estimate of $B$:} this integral is the sum of three terms,
\begin{equation}
\begin{split}
B=&\int\phi^5\left(\ast d \Delta \xi^i_k\cdot P_k \nabla u_k-\ast d\Delta \xi^i\cdot P\nabla u\right)\\
&=\int\phi^5\ast d \Delta \xi^i_k\cdot P_k \nabla (u_k-u)+\int\phi^5\ast d \Delta \xi^i_k\cdot (P_k-P) \nabla u\\
&\qquad +\int\phi^5\left(\ast d \Delta \xi^i_k-\ast d \Delta \xi^i\right)\cdot P\nabla u\\
&=\mathit{III}+\mathit{IV}+\mathit{V}.
\end{split}
\end{equation}
Since, thanks to the boundary conditions on $\Delta \xi^i_k$,
$$
\int\ast d \Delta \xi^i_k\cdot \nabla \left( \phi^5 P_k (u_k-u)\right)=0,
$$
we obtain
\begin{equation}
\begin{split}
\mathit{III}&=-5\int \phi^4\ast d\Delta \xi^i_k\cdot\left(\nabla \phi \otimes P_k(u_k-u)\right)\\&{}\qquad -\int \phi^5\ast d\Delta \xi ^i_k\cdot(\nabla P_k)(u_k-u)\\
&=\mathit{III_a}+\mathit{III_b}.
\end{split}
\end{equation}
Integrating by parts, taking into account the convergence of $u_k$ to $u$ in any $L^s$ and arbitrary integrability of $P_k$ we obtain
\begin{equation}
\begin{split}
|\mathit{III_a}|&=\left|\int \Delta \xi^i_k\wedge d\left(\phi^4P_k(u_k-u)\right)\wedge d\phi\right|\\
&\lesssim \int \phi^4|\nabla\phi ||\nabla^2\xi_k||\nabla P_k||u_k-u|+\int \phi^4|\nabla \phi ||\nabla^2\xi_k||P_k||\nabla u_k-\nabla u|\\
&=o(1).
\end{split}
\end{equation}
For $\mathit{III_b}$ we get, integrating by parts,
\begin{equation}
\begin{split}
|\mathit{III_b}|&=\left|\int \Delta \xi^i_k d P_k \wedge d\left(\phi^5 (u_k-u)\right)\right|\\
&\leq \left|\int \phi^5 \Delta \xi^i_k d P_k \wedge d(u_k-u)\right|+\int \phi^4|\nabla \phi ||\nabla^2\xi _k||\nabla P_k||u_k-u|\\
&\leq \left|\int \left(\phi^2 \Delta \xi^i_k\right)\left(\phi\,  dP_k\right)\wedge\left(\phi^2\, d(u_k-u)\right)\right|+o(1)\\
&\leq \left|\int \left(\phi^2 \Delta \xi^i_k\right)\left(\phi\,  dP_k\right)\wedge d\left(\phi^2 (u_k-u)\right)\right|+o(1).
\end{split}
\end{equation}
As before, using H\"older's and Sobolev's inequalities, we estimate the last integral by the product
\[
\left(\int \phi^4|\nabla^2\xi_k|^2\right)^{1/2}\left(\int \phi^4|\nabla P_k|^4\right)^{1/4}\left(\int |\nabla^2(\phi^2(u_k-u))|^2\right)^{1/2}\, .
\]
This yields
\begin{equation}
\begin{split}
	\label{est:III_b}
\mathit{III_b} &\lesssim \left(\int \phi^4\left(|\nabla^2\xi_k|^2+|\nabla P_k|^4+|\nabla^2 u_k|^2+|\nabla^2 u|^2\right)\right)^{5/4}+o(1).
\end{split}
\end{equation}
The remaining integrals in $B$ -- the terms $\mathit{IV}$ and $\mathit{V}$ -- are estimated in much the same way. Integrating by parts, and then dealing as in the proof of \eqref{est:III_b}, we obtain
\begin{equation}\label{eq: IV}
\begin{split}
|\mathit{IV}|&=\left|\int \phi^5\ast d\Delta \xi^i_k\cdot (P_k-P)\nabla u\right|\\
&=\left|\int\Delta \xi^i_k d \left(\phi^5 (P_k-P)\right))\wedge d u\right|\\
&\leq \left|\int\phi^2 \Delta \xi^i_k\; d\big(\phi (P_k-P)\big)\wedge d (\phi^2 u)\right|\\& \qquad +4\int\phi^4|\nabla \phi ||\nabla^2\xi_k||P_k-P||\nabla u|+o(1)\\
&\lesssim \left(\int \phi^4\left(|\nabla^2\xi_k|^2+|\nabla P_k|^4+|\nabla P|^4+|\nabla^2 u|^2\right)\right)^{5/4}+o(1).
\end{split}
\end{equation}
Similarly,
\begin{equation}\label{eq: V}
\begin{split}
|\mathit{V}|&=\left|\int\phi^5\ast d\Delta (\xi^i_k-\xi^i)\cdot P\nabla u\right|\\
&=\left|\int \Delta (\xi^i_k-\xi^i)d(\phi^5 P)\wedge du\right|\\
&\leq \left|\int\phi^5 \Delta (\xi^i_k-\xi^i)d P\wedge du\right|+\left|\int\phi^4 \Delta (\xi^i_k-\xi^i)\wedge d\phi \wedge P du\right|\\
&\lesssim \left(\int \phi ^4\left(|\nabla^2\xi_k|^2+|\nabla^2 \xi |^2+|\nabla P|^4+|\nabla^2 u|^2\right)\right)^{5/4}+o(1).
\end{split}
\end{equation}
(in the calculations above we use H\"older's and Sobolev's inequalities together with the fact that $\nabla^2\xi_k\rhu\nabla^2\xi $ in $L^2$).

Altogether we obtain the estimate
$$
|B|\lesssim \left(\int \phi ^4\, f_k\; \right)^{5/4}+o(1),
$$
where $f_k:=|\nabla^2\xi_k|^2+|\nabla^2 \xi |^2+ |\nabla P_k|^4+|\nabla P|^4+|\nabla^2 u_k|^2+|\nabla^2 u|^2\in L^1$.
Putting together the estimates for $A$ and $B$ we get
\begin{equation}
\left|\left\langle T^i_k-T^i,\phi^5\right\rangle \right|\lesssim \left(\int \phi^4 d\mu_k\right)^{5/4}+o(1),
\end{equation}
with
\begin{equation}
\mu_k=|H_k|^{4/3}+|H|^{4/3}+f_k.
\end{equation}
Note that by our assumptions the $\mu_k$ are uniformly bounded in $L^1$. Passing to the limit in the space of measures we obtain
$$
T^i_k-T^i\to d\nu ,\qquad d\mu_k \to d\mu,
$$
and
$$
\left|\int \phi^5 d\nu \right|\lesssim \left(\int \phi^4 d\mu \right)^{5/4}.
$$
Now the claim of Lemma~\ref{prawastr} follows directly from the concentration-compactness lemma of P.-L. Lions, cf. \cite[Lemma 1.2, p.~161]{Lions}.
\end{proof}

Passing to the limit in \eqref{weaksystem_k}, we obtain
\begin{equation}\label{lim in Bj}
\begin{split}
\int P \Delta u\Delta \psi
&-\int (\tD\cdot \nabla u)\Delta \psi \\
&+ \int \Big[2\dv(\nabla P\otimes\nabla u)+(\tE \cdot \nabla u)\Big]
\cdot \nabla \psi \\
&=\int(H\cdot \nabla u)\psi
+\int\ast d\Delta \xi\cdot (P \nabla u)\psi +\langle S_j,\psi\rangle
\end{split}
\end{equation}
for each smooth test map $\psi$ compactly supported in $B_j$, with $H$, $\tD$ and $\tE$ given by \eqref{eq:faleczki}, and $(P,\xi)$ related to $\Omega=\lim \Omega_k$ via Uhlenbeck's Theorem~\ref{Uhlenbeck}. The singular distribution $S_j\in \mathcal{D}'$ is a countable series of Dirac delta measures with vector-valued coefficients, $S_j=\sum_\ell a_\ell \delta_{x_\ell}$. It follows from Lemma~\ref{prawastr} that the series $\sum |a_\ell|_1$ converges.

\subsection{Removing the singularities}
\label{removing}

To show that the limit $u$ of $u_k$ satisfies the limiting system not just in $B_j\setminus A$, where $A=\{x_\ell\colon \ell\in J\}$ is countable, but in fact in the whole $B_j$, we rely on the fact that $W^{2,2}(\R^4)$ contains unbounded functions. More precisely, the following holds.

\begin{lemma} There exists a sequence of functions $\Phi_k\in C_o^\infty (\R^4)$ such that $0\le \Phi_k\le 1$, $\Phi_k\equiv 1$ on $B(0,r_k)$ for some $r_k>0$,
	\begin{equation}
		\label{placek}
		\Phi_k\equiv 0 \quad\text{on } \R^n\setminus B(0,R_k), \qquad R_k\to 0 \quad\text{as \;} k\to \infty,
	\end{equation}
and \begin{equation}
\label{Phi k w22}
	\|\Phi_k\|_{W^{2,2}}\to 0 \qquad\mbox{as } k\to\infty.
\end{equation}
\label{capacity}	
\end{lemma}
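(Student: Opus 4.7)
The plan is to build $\Phi_k$ from the canonical unbounded $W^{2,2}$-witness on $\R^4$, namely $v(x)=\log\log(1/|x|)$, which exhibits the vanishing of the $(2,2)$-capacity of a point in the critical dimension. The strategy is: choose outer radii $R_k\to 0$; compose $v$ (rescaled to scale $R_k$) with a smooth one-variable cutoff; let the level sets do the work.

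Concretely, fix a nondecreasing $\chi\in C^\infty(\R;[0,1])$ with $\chi\equiv 0$ on $(-\infty,0]$ and $\chi\equiv 1$ on $[1,\infty)$. For $R_k:=1/k$ and $k$ large, set $v_k(x):=\log\log(eR_k/|x|)$ on $0<|x|<R_k$ and define
\[
\Phi_k(x) := \chi\!\left(\frac{v_k(x)}{k}\right),
\]
extended by $1$ at $0$ and by $0$ on $\R^4\setminus B(0,R_k)$. Since $\chi$ is flat at the endpoints $\{0,1\}$ and $v_k$ is smooth on $0<|x|<R_k$, the function $\Phi_k$ is $C^\infty$ across the level surfaces $\{v_k=0\}$ and $\{v_k=k\}$; it is identically $1$ on $B(0,r_k)$ with $r_k:=eR_k\cdot e^{-e^k}$, identically $0$ outside $B(0,R_k)$, so $\Phi_k\in C_o^\infty(\R^4)$ and \eqref{placek} is satisfied.

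It remains to prove \eqref{Phi k w22}. Direct differentiation gives
\[
|\nabla v_k|\lesssim \frac{1}{|x|\log(eR_k/|x|)}, \qquad |\nabla^2 v_k|\lesssim \frac{1}{|x|^2\log(eR_k/|x|)},
\]
while the chain rule yields, with both right-hand sides supported in the annulus $A_k:=\{r_k<|x|<R_k\}$,
\[
|\nabla \Phi_k|\lesssim \frac{1}{k}|\nabla v_k|, \qquad |\nabla^2 \Phi_k|\lesssim \frac{1}{k^2}|\nabla v_k|^2+\frac{1}{k}|\nabla^2 v_k|.
\]
Passing to polar coordinates and substituting $s=\log(eR_k/r)$, the relevant integrals reduce to
\[
\int_{A_k}\frac{dx}{|x|^{4}\log^{p}(eR_k/|x|)} \;=\; C\int_{1}^{e^k}\frac{ds}{s^{p}}, \qquad p\in\{2,4\},
\]
both uniformly bounded in $k$. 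Combined with the $k^{-1}$ and $k^{-2}$ prefactors this gives
\[
\|\nabla \Phi_k\|_{L^2}^2+\|\nabla^2\Phi_k\|_{L^2}^2 \;\lesssim\; \frac{1}{k^2},
\]
and $\|\Phi_k\|_{L^2}^2\le |B(0,R_k)|\lesssim R_k^{4}\to 0$, proving \eqref{Phi k w22}.

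The only substantive step is the chain-rule bookkeeping for the double composition $\chi\circ\log\log$; after the substitution $s=\log(eR_k/r)$ every integral becomes an elementary convergent power of $s$, so I do not anticipate any obstacle beyond the routine calculation sketched above.
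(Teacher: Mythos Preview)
Your proof is correct and takes essentially the same route as the paper: both compose a smooth one-variable cutoff with a $\log\log$-type radial function, so that the derivatives are supported on shrinking level sets and the $W^{2,2}$-energy vanishes. The paper uses the fixed function $f(x)=\log\log(e-\log|x|)$, sets $\Phi_k=\zeta(k+1-f)$, and concludes by dominated convergence from $f\in W^{2,2}\cap W^{1,4}$ (Lemma~\ref{unbounded w22}); you rescale instead, taking $v_k(x)=\log\log(eR_k/|x|)$ with $R_k=1/k$ and $\Phi_k=\chi(v_k/k)$, and compute the integrals explicitly --- this buys you a quantitative rate $O(k^{-2})$, at the cost of a little more calculation. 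One small bookkeeping remark: the displayed integrals with $p\in\{2,4\}$ control $\|\nabla^2\Phi_k\|_{L^2}^2$ (via $\int|\nabla^2 v_k|^2$ and $\int|\nabla v_k|^4$), but $\|\nabla\Phi_k\|_{L^2}^2\lesssim k^{-2}\int_{A_k}|x|^{-2}\log^{-2}(eR_k/|x|)\,dx$ is not of that form; nonetheless it is trivially $\lesssim R_k^2/k^2$, so your stated conclusion holds.
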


For the reader's convenience, a proof of this lemma is sketched in the appendix.

\smallskip

Fix $\ell_0\in J$ and assume for the sake of simplicity that $x_{\ell_0}=0\in B_j$. Testing each equation $i=1,2,\ldots,n$ of system \eqref{lim in Bj} with $\psi_k=(\pm\Phi_k, \ldots,\pm\Phi_k)$, the signs being equal to the signs of coordinates of the coefficient $a_{\ell_0}$ in $S_j$, and keeping in mind that
\[
P \Delta u, \; \tD\cdot \nabla u, \; \nabla P\otimes\nabla u,\; \Delta\xi \in L^2, \qquad H\cdot \nabla u\in L^1, \qquad d\big((P\nabla u)\psi_k\big) \in L^2,
\]
we easily obtain
\[
o(1)=o(1)+\big\langle S_j,\psi_k\rangle= o(1)+ |a_{\ell_0}|_1 + \sum_{\ell\not=\ell_0} \langle a_\ell, \psi_k(x_l) \rangle\, , \quad k\to\infty.
\]
By \eqref{placek} and Lebesgue's dominated convergence theorem, the sum $\sum_{\ell\not=\ell_0}$ above tends to $0$ as $k\to\infty$. Thus, upon passing to the limit, we obtain $a_{\ell}=0$ for $\ell=\ell_0$. It follows that $S_j=0$.

\medskip
Thus, relying on the above and selecting, via the standard diagonal procedure, a subsequence $k'\to\infty$ such that $u_k'\to u$ (and all the coefficient functions converge in their appropriate spaces) in all good balls $B_j$ simultaneously, we check that
\begin{equation}\label{lim off A1}
\Delta^2 u=\Delta (D\cdot\nabla u)+\dv (E\cdot \nabla u)+G\cdot\nabla u + \Delta \Omega\cdot \nabla u\qquad\text{in \;} \bb^4\setminus A_1,
\end{equation}
where $A_1=\{x_1,\ldots, x_s\}$, $s\le N$, denotes the finite set of bad points, cf. \eqref{first bad}. To see that in fact \eqref{lim off A1} holds in the whole ball $B^4$, we pick an arbitrary test function $\varphi\in C_0^\infty(\bb^4)$ and write it as the sum
\[
\varphi(x)=\sum_{j=1}^s \varphi(x)\Phi_k(x-x_j) + \varphi(x)\biggl(1-\sum_{j=1}^s\Phi_k(x-x_j)\biggr) =: \varphi_{0,k}(x)+\varphi_{1,k}(x).
\]
Then, $\varphi_{1,k}\to\varphi$ in $W^{2,2}$ as $k\to \infty$. Moreover, $\text{supp}\, \varphi_{1,k}\Subset \bb^4\setminus A_1$. It follows easily that the weak form of \eqref{lim off A1} holds not just for test functions $\varphi_{1,k}\in C_0^\infty (\bb^4\setminus A_1)$, but also for an arbitrary $\varphi\in C_0^\infty (\bb^4)$.

The whole proof of Theorem~\ref{main theorem} is complete now.

\section{An example}
\label{example}

The following example shows that a system of type \eqref{system} satisfying assumptions \eqref{c spaces} may have solutions that are in $C^1 \setminus C^2$.

\medskip

Set $\phi(r)=\log\log(e-\log r)$ for $r>0$.

\begin{lemma}
\label{unbounded w22}
The function $f(x)=\phi(|x|)=\log (\log (e-\log (|x|)))$ is in $W^{2,2}(\bb^4)$.
\end{lemma}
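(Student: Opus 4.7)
The plan is to exploit radial symmetry and reduce everything to one-dimensional integrals. For $f(x)=\phi(|x|)$ smooth on $\mathbb{B}^4\setminus\{0\}$, the chain rule gives $|\nabla f(x)|=|\phi'(r)|$ pointwise, and a direct computation of the Hessian (the cross term vanishing by orthogonality of the radial and spherical parts) yields
\[
|\nabla^2 f(x)|^2 = |\phi''(r)|^2 + \frac{3}{r^2}|\phi'(r)|^2, \qquad r=|x|.
\]
Since in $\mathbb{R}^4$ the polar volume element is $dx = 2\pi^2 r^3\, dr\, d\sigma$, checking $f\in W^{2,2}(\mathbb{B}^4)$ amounts to verifying
\[
\int_0^1 \bigl(|\phi(r)|^2 + |\phi'(r)|^2 + |\phi''(r)|^2 + r^{-2}|\phi'(r)|^2\bigr)\, r^3\, dr < \infty.
\]

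The next step is to differentiate $\phi$ explicitly. The chain rule gives
\[
\phi'(r) = -\frac{1}{r(e-\log r)\log(e-\log r)},
\]
and another differentiation yields, as $r\to 0^+$, the asymptotic bound
\[
|\phi''(r)| \lesssim \frac{1}{r^2(e-\log r)\log(e-\log r)}.
\]
Plugging these into the radial integral, both $|\phi''|^2\, r^3$ and $r^{-2}|\phi'|^2\cdot r^3$ are controlled by $\bigl[r(e-\log r)^2(\log(e-\log r))^2\bigr]^{-1}$. The substitution $t=e-\log r$, $dt = -dr/r$, converts the resulting integral into
\[
\int_e^\infty \frac{dt}{t^2(\log t)^2},
\]
which is plainly finite; the $L^2$ integrals of $\phi$ and $\phi'$ carry additional powers of $r$ and are handled by the same substitution.

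Finally, one needs to justify that the pointwise derivatives computed on $\mathbb{B}^4\setminus\{0\}$ are the actual weak derivatives on $\mathbb{B}^4$. Because the singleton $\{0\}$ has zero $W^{2,2}$-capacity in $\mathbb{R}^4$ (witnessed by the cutoffs $\Phi_k$ from Lemma \ref{capacity}), one approximates $f$ by $f\cdot(1-\Phi_k)$, which is compactly supported in $\mathbb{B}^4\setminus\{0\}$, and passes to the limit using that $\Phi_k\to 0$ in $W^{2,2}$ while $f$ itself has the pointwise Hessian just shown to be $L^2$. The only place the argument is even mildly delicate is the asymptotic identification of $\phi''(r)$; beyond that there is no substantial obstacle, and the triple-log in $\phi$ leaves the integrals with comfortable room to converge.
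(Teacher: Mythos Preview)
Your argument is correct and follows essentially the same route as the paper: compute the second derivatives of the radial function, bound them by $\bigl[r^2(e-\log r)\log(e-\log r)\bigr]^{-1}$, and verify the resulting one-dimensional integral converges after the substitution $t=e-\log r$. The only cosmetic differences are that the paper computes $\partial^2_{x_ix_j}f$ directly in Cartesian coordinates rather than via your radial Hessian formula, and invokes Gagliardo--Nirenberg to avoid checking $f,\nabla f\in L^2$ separately.

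One genuine caution: your appeal to Lemma~\ref{capacity} to pass from pointwise to weak derivatives is circular in this paper, since the cutoffs $\Phi_k$ there are built from the very function $f$ whose $W^{2,2}$-membership you are establishing (see the proof in the appendix). The fix is painless: ordinary rescaled cutoffs $\eta_\epsilon(x)=\eta(x/\epsilon)$ already suffice, because in dimension four $\|\nabla\eta_\epsilon\|_{L^2}\lesssim\epsilon$ and $\|\nabla^2\eta_\epsilon\|_{L^2}\lesssim 1$, and the error terms in the integration by parts are then controlled by $\|f\|_{L^2(B_{2\epsilon})}$ and $\|\nabla f\|_{L^2(B_{2\epsilon})}$, both of which tend to zero. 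Alternatively, simply omit this step, as the paper itself does.
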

\begin{proof}
The function is obviously in $L^2(\bb^4)$, therefore (e.g. by Gagliardo-Nirenberg's inequality) it is enough to check that $\nabla^2 f\in L^2(\bb^4)$. This is done by an elementary computation which, for the sake of completeness, is sketched in the appendix.
\end{proof}

\begin{lemma}\label{third} The functions $t\phi'(t)$, $t^2\phi''(t)$ and $t^3\phi'''(t)$ are bounded on $(0,1]$.
\end{lemma}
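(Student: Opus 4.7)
The plan is a direct chain-rule computation reduced to a structural induction. For $r \in (0,1]$, set $v := e - \log r$ and $w := \log v$, so that $\phi(r) = \log w$, and note the trivial bounds $v \ge e$ and $w \ge 1$, together with the identities $v'(r) = -1/r$ and $w'(r) = v'/v = -1/(rv)$. A direct calculation gives
\[
\phi'(r) = -\frac{1}{r\, v\, w}, \qquad \text{hence}\qquad r\phi'(r) = -\frac{1}{vw},
\]
which is bounded by $1/e$ on $(0,1]$; this disposes of the case $n = 1$.

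For the remaining cases I would prove the following structural claim by induction on $n$: \emph{for every $n \ge 1$, $\phi^{(n)}(r)$ is a finite integer linear combination of terms of the form $r^{-n}\, v^{-a}\, w^{-b}$ with $a, b \ge 1$.} The base case is the formula for $\phi'$ above. For the inductive step, differentiating a single summand $r^{-n}\, v^{-a}\, w^{-b}$ produces three contributions: one from $\partial_r(r^{-n})$; one from $\partial_r(v^{-a}) = -a\, v^{-a-1} v' = (a/r)\, v^{-(a+1)}$; and one from $\partial_r(w^{-b}) = -b\, w^{-b-1} w' = (b/(rv))\, w^{-(b+1)}$. In every contribution the power of $1/r$ in the denominator increases by exactly one, while the exponents of $v$ and $w$ remain positive (indeed, they never decrease).

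Granted this structural claim, $r^n \phi^{(n)}(r)$ is a finite sum of terms of the form $c\, v^{-a}\, w^{-b}$ with $a, b \ge 1$, each bounded on $(0,1]$ by $e^{-a} \le 1$. Hence $r^n \phi^{(n)}(r)$ is uniformly bounded on $(0,1]$ for $n = 1, 2, 3$ (and in fact for every $n \ge 1$). The only obstacle is the bookkeeping in the induction, which is entirely mechanical; no conceptual difficulty arises.
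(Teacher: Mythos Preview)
Your proof is correct and takes a cleaner route than the paper's. The paper simply computes $t\phi'(t)$, $t^2\phi''(t)$, and $t^3\phi'''(t)$ explicitly as rational expressions in $e-\log t$ and $\log(e-\log t)$, then observes that each tends to $0$ as $t\to 0^+$; your substitution $v=e-\log r$, $w=\log v$ packages the same information but replaces the three separate calculations by a single structural induction showing that $\phi^{(n)}(r)$ is an integer combination of monomials $r^{-n}v^{-a}w^{-b}$ with $a,b\ge 1$. The payoff of your approach is that it gives the result for every $n\ge 1$ with no extra work and avoids the bookkeeping of the third derivative, while the paper's brute-force formulas have the minor advantage of exhibiting the explicit expressions (and the stronger conclusion that each quantity tends to $0$, not merely stays bounded). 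Your inductive step is accurate: differentiating $r^{-n}v^{-a}w^{-b}$ yields $-n\,r^{-(n+1)}v^{-a}w^{-b}+a\,r^{-(n+1)}v^{-(a+1)}w^{-b}+b\,r^{-(n+1)}v^{-(a+1)}w^{-(b+1)}$, so the exponent of $r^{-1}$ increases by one and the exponents of $v^{-1},w^{-1}$ never drop below $1$; combined with $v\ge e$, $w\ge 1$ on $(0,1]$, this gives the bound.
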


\begin{proof} A computation shows that
\begin{eqnarray*}
|t\phi'(t)|&=&\frac{1}{(e-\log t)\log(e-\log t)}\; ,\\
|t^2 \phi''(t)|&=&\frac{-1+(e-1-\log t)\log(e-\log t)}{(e-\log t)^2\log^2(e-\log t)}\; ,\\
|t^3 \phi'''(t)|&=&\frac{-2 + 3\,\left( -1 + e - \log (t) \right) \,
     \log (e - \log (t))}{{\left( e -
        \log (t) \right) }^3\,{\log (e - \log (t))}^3
    }\\
 &&-\frac{
    \left( 2 - 3\,e + 2\,e^2 +
       \left( 3 - 4\,e \right) \,\log (t) +
       2\,{\log (t)}^2 \right) \,
     {\log (e - \log (t))}^2}{{\left( e -
        \log (t) \right) }^3\,{\log (e - \log (t))}^3
    }.
\end{eqnarray*}
Clearly, the right hand sides converge to $0$ as $t\to 0$. The lemma follows.
\end{proof}

\begin{example}
Define $w:\bb^4\to \R$,
\begin{equation}
	\label{wu} w(x)=|x|^2\sin \phi (|x|).
\end{equation}
One easily checks, using Lemma~\ref{third}, that $\nabla w$ and $D^2 w$ are bounded on $\bb^4$. However,
$$
\frac{\partial^2w}{\partial x_1^2}(0,0,0,t)=2\sin\phi (t)+3t\phi'(t)\cos\phi (t)
$$
does not have a limit for $t\to 0$, and thus $w$ \emph{is not} a function of class $C^2$. Finally,
$$
|\nabla^3 w(x)|\lesssim \frac{1}{|x|(e-\log |x|)\log(e-\log |x|)}\in L^4(\bb^4).
$$

Set $C>0$ to be the bound on $|\nabla w|$ on $\bb^4$ and consider \begin{equation}
\label{v a}
v(x)=w(x)+2C\sum_{i=1}^4 x_i\, , \qquad a(x)=\Delta v|\nabla v|^{-2}\, .	
\end{equation}
By definition, $|\nabla v|\geq C>0$ on $\bb^4$, and
$|\nabla v|$ and $|D^2 v|$ are bounded on $\bb^4$. Thus,  $a(x)$ is bounded on $\bb^4$.

We also have $|D^3 v|=|D^3 w|\in L^4(\bb^4)$, and
$$
|\nabla a|\lesssim \frac{|\nabla \Delta v||\nabla v|^2+|\Delta v||\nabla v||D^2 v|}{|\nabla v|^4}\lesssim 1+|D^3 v|\in L^4(\bb^4).
$$
Thus, $a(x)\in W^{1,4}(\bb^4)\cap L^\infty(\bb^4)$. Consider the equation
\begin{equation}\label{ex:equation}
\Delta(\Delta u(x))=\Delta\left(a(x)|\nabla u|^2\right)
\end{equation}
Clearly this is an equation of the type
\begin{equation}\label{ex:eq type}
\Delta^2 u=\Delta\big( D(u,\nabla u)\cdot \nabla u\big) +\text{other terms (with zero coefficients)},
\end{equation}
where $D=a(x)\nabla u$ satisfies
\begin{itemize}
\item $D\in W^{1,2}$ whenever $u\in W^{2,2}$,
\item $|D(x)|\leq \|a\|_{L^\infty} |\nabla u(x)|$,
\item $\|\nabla D\|_{L^2}\leq \|\nabla a\|_{L^4} \|\nabla u\|_{L^4}+\|a\|_{L^\infty}\|D^2 u\|_{L^2}$.
\end{itemize}
It is also obvious, by the very definition of $a$, that $v$ solves \eqref{ex:equation}. Therefore
\eqref{ex:eq type}, under the conditions on $D$ listed above, admits non-smooth solutions.

Note, however, that the function $D$ given above \emph{does not} satisfy the pointwise estimate for $\nabla D$ given in \eqref{Gr E}.
\end{example}

\appendix

\section*{Auxiliary lemmata}
\setcounter{section}{1}
\setcounter{equation}{0}

\subsection*{An unbounded function in $W^{2,2}(\bb^4)$.} As explained in Section~4, to show that $f(x)=\log\big(\log(e-\log|x|)\big)$ is in $W^{2,2}$ on the unit ball $\bb^4$, it is enough to check that $D^2f\in L^2$. We have (Mathematica) for $x\in \bb^4$:
\begin{equation*}
\begin{split}
\left|\partial^2_{x_i x_j} f(x)\right|&=\left|-\frac{x_i x_j\left(1+(1-2e+2\log |x|)\log(e-\log|x|)\right)}{|x|^4(e-\log|x|)^2\log^2(e-\log|x|)}\right|\\
&\lesssim \left|\frac{x_i x_j}{|x|^4(e-\log|x|) \log(e-\log|x|)}\right|\\
&\lesssim \frac{1}{|x|^2\big(\!-\log |x|\big)\log\big(\!-\log |x|\big)}.
\end{split}
\end{equation*}
Integrating the above squared over $\bb^4$, in polar coordinates, amounts to calculating
$$
\int_0^1 \frac{dr}{r \log^2 r \log^2(-\log r)}=\int_1^\infty\frac{dt}{t \log^2 t \log^2 \log t},
$$
with the latter integral convergent, since the series
$$
\sum \frac{1}{n\log^2 n\log^2 \log n}
$$
is convergent, by Cauchy's condensation test.

\subsection*{Proof of Lemma~\ref{capacity}} Let $f\in W^{2,2}$ be the function introduced in Lemma~\ref{unbounded w22}. Fix a function $\zeta\in C^\infty(\R)$ with $0\le \zeta\le 1$, $\zeta\equiv 0 $ on $[1, +\infty)$ and $\zeta\equiv 1$ on $(-\infty, \frac 14)$. Set
\begin{equation}
	\Phi_k(x) =\zeta(k+1-f(x)), \qquad x\in \bb^4, \quad k=1,2,\ldots
\end{equation}
Clearly, $\Phi\in W^{2,2}(\bb^4)$. Moreover, $\Phi_k(x)=0$ if $f(x)\le k$. The equality $\Phi_k(x)\equiv 1$ holds  whenever $f(x)\ge k+\frac 34$, i.e. on a neighborhood of $0$. Since $f\ge 0$ on $\bb^4$ and $f$ is smooth except at $0$, $\Phi_k$ is smooth on $\bb^4$, and can be extended by $0$ to the whole space $\R^4$.
 Finally,
\begin{gather*}
|\nabla \Phi_k(x)|\lesssim |\nabla f(x)|\,\cdot\, \charfn_{\{f>k\}},\\
|\nabla^2\Phi_k(x)| \lesssim \big(|\nabla^2 f(x)| + |\nabla f(x)|^2\big)\,\cdot\, \charfn_{\{f>k\}}\, .
\end{gather*}
Since $f\in W^{2,2}\cap W^{1,4}$ is radial and $f\to +\infty$ at $0$, we obtain $\|\Phi_k\|_{W^{2,2}}\to 0$ as $k\to \infty$. For $x\not=0$, we simply have $\Phi_k(x)=0$ for all $k$ sufficiently large. This completes the proof.

\bibliography{bihar-compactness}{}
\bibliographystyle{amsplain}

\end{document}